\def \C {\mathbb C}
\def \N {\mathbb N}
\def \R {\mathbb R}
\def \Z {\mathbb Z}
\def\cB{\mathcal{B}}
\def\cC{\mathcal{C}}
\def\cD{\mathcal{D}}
\def\cF{\mathcal{F}}
\def\cH{\mathcal{H}}
\def\cK{\mathcal{K}}
\def\cL{\mathcal{L}}
\def\cS{\mathcal{S}}
\def\vareps{\varepsilon}
\def\vp{\mathbf{p}}
\def\vx{\mathbf{x}}
\def\vy{\mathbf{y}}
\newcommand{\expect}[1]{\ensuremath{\mathbf{E}\big(#1\big)}}
\newcommand{\condprob}[2]{\ensuremath{\mathbf{P}\big(#1\bigm|#2\big)}}
\newcommand{\condexpect}[2]{\ensuremath{\mathbf{E}\big(#1\bigm|#2\big)}}
\newcommand{\ind}[1]{\ensuremath{{1\!\!1}{\big\{\,#1\,\big\}}}}
\def\la{\langle}
\def\ra{\rangle}
\def\one{1\!\!1}
\def\grad{\mathrm{grad}\,}
\DeclareMathOperator*{\Perm}{Perm}
\renewcommand{\d}{\mathrm d}
\newcommand{\abs}[1]{\left|\,{#1}\,\right|}
\newcommand{\norm}[1]{\left\|\,{#1}\,\right\|}
\def \wt {\widetilde}
\def\wh{\widehat}
\newtheorem {theorem}{Theorem}
\newtheorem {lemma}{Lemma}
\newtheorem {corollary}{Corollary}
\newtheorem* {theorem*}{Theorem}
\newtheorem* {thm*}{Theorem}
\newtheorem* {lemma*}{Lemma}
\newtheorem* {lem*}{Lemma}
\newtheorem* {corollary*}{Corollary}
\newtheorem* {cor*}{Corollary}
\newtheorem* {proposition*}{Proposition}
\newtheorem* {prop*}{Proposition}
\newtheorem* {definition*}{Definition}
\newtheorem* {def*}{Definition}
\newtheorem* {conjecture*}{Conjecture}
\newtheorem* {remark*}{Remark}
\newtheorem* {rem*}{Remark}
\def\be{\begin{equation}}
\def\ee{\end{equation}}
\def\bea{\begin{eqnarray}}
\def\eea{\end{eqnarray}}
\newcommand{\wick}[1]{\ensuremath{:\!\! #1 \!\!:\,}}
\title{Diffusive limit for \\ self-repelling Brownian polymers in $d\ge3$}
\author{
{\sc Ill\'es Horv\'ath} \qquad {\sc B\'alint T\'oth} \qquad {\sc B\'alint Vet\H o}
\\
Institute of Mathematics, Budapest University of Technology
\\
Egry J\'ozsef u.\ 1, Budapest, H-1111, Hungary
\\
email: {\tt \{pollux,balint,vetob\}@math.bme.hu}
}
\begin{document}

\maketitle

\begin{abstract}
The \emph{self-repelling Brownian polymer} model (SRBP) initiated by Durrett and Rogers in \cite{durrett_rogers_92} is the continuous space-time counterpart of the \emph{myopic (or 'true') self-avoiding walk} model (MSAW) introduced in the physics literature by Amit, Parisi and Peliti in \cite{amit_parisi_peliti_83}. In both cases, a random motion in space is pushed towards domains less visited in the past by a kind of negative gradient of the occupation time measure.

We investigate the asymptotic behaviour of SRBP in the non-recurrent dimensions. First, extending 1$d$ results from \cite{tarres_toth_valko_09}, we identify a natural stationary (in time) and ergodic distribution of the environment (essentially, smeared-out occupation time measure of the process), as seen from the moving particle. As main result we prove that in three and more dimensions, in this stationary (and ergodic) regime, the displacement of the moving particle scales diffusively and its finite dimensional distributions
converge to those of a Wiener process. This result settles part of the conjectures (based on non-rigorous renormalization group arguments) in \cite{amit_parisi_peliti_83}.

The main tool is the non-reversible version of the Kipnis\,--\,Varadhan-type CLT for additive functionals of ergodic Markov processes and the \emph{graded sector condition} of Sethuraman, Varadhan and Yau, \cite{sethuraman_varadhan_yau_00}.

\medskip\noindent
{\sc MSC2010:} 60K37, 60K40, 60F05, 60J55

\medskip\noindent
{\sc Key words and phrases:} 
self-repelling random motion, local time, central limit theorem
\end{abstract}

\section{Introduction and background}
\label{s:intro}


The asymptotic scaling behaviour of \emph{self-repelling} random motions with long memory has been a mathematical challenge since the early eighties. The two basic models considered in the physical and probabilistic literature are the so-called \emph{myopic (or 'true') self-avoiding random walk} (MSAW), which appeared first in the physics literature in \cite{amit_parisi_peliti_83}, and the so-called \emph{self-repelling Brownian polymer} (SRBP) model, which was
initiated in the probabilistic literature in \cite{norris_rogers_williams_87}, \cite{durrett_rogers_92}. The two models (or, better said, families of models), although having their origins in different cultures and having different motivations, are phenomenologically very similar.

The simplest formulation of the MSAW model is as follows: Let $X(n)$ be a nearest neighbour random walk on $\Z^d$ and
\begin{equation}
\ell(n,y):=\ell(0,y)+\abs{\{0<m\le n: X(m)=y\}}
\end{equation}
its occupation time measure, taken with some (possibly signed) initial values $\ell(0,y)\in\Z$. The walk is governed by the following law:
\begin{align}
\label{law}
&
\condprob{X(n+1)=y}{\text{past, }X(n)=x}=
\\
\notag & \hskip5cm \ind{\abs{x-y}=1}
\frac{r(\ell(n,x)-\ell(n,y))}{\sum_{z:\abs{z-x}=1} r(\ell(n,x)-\ell(n,z))}
\end{align}
where $r:\Z\to(0,\infty)$ is a non-decreasing (and non-constant) weight function. (In the original \cite{amit_parisi_peliti_83} and the subsequent
physics papers, the specific choice $r(u)=\exp\{\beta u\}$, $\beta>0$, was made.) This means phenomenologically that the random walk $X(n)$ is pushed by the negative gradient of its occupation time measure.

The SRBP model is defined as follows. Let $V:\R^d\to\R_+$ be an approximate identity, that is a smooth ($C^{\infty}$), spherically symmetric function with sufficiently fast decay at infinity, and
\begin{equation}
\label{FisgradV}
F:\R^d\to\R^d,
\qquad
F(x):=-\grad \, V(x).
\end{equation}
For reasons which will be clarified later, we also impose the condition of \emph{positive definiteness} of $V$:
\begin{equation}
\label{Vposdef}
\wh V(p):=(2\pi)^{-d/2}\int_{\R^d} e^{i p\cdot x}V(x) \,\d x
\ge0.
\end{equation}
A particular choice could be $V(x):=\exp\{-\abs{x}^2/2\}$.

Let $t\mapsto B(t)\in\R^d$ be standard $d$-dimensional Brownian motion and define the stochastic process $t\mapsto X(t)\in\R^d$ as the solution of the SDE
\begin{equation}
\label{Brpoly} X(t)=  B(t)+\int_0^t\int_0^s F(X(s)-X(u))\,\d u\,\d s,
\end{equation}
or
\begin{equation}
\label{Brpolydiff}
\d X(t)= \d B(t)+
\big(\int_0^t F(X(t)-X(u))\,\d u\big)\,\d t.
\end{equation}

\medskip
\noindent{\bf Remark:}
Other types of self-interaction functions $F$ give rise to various different asymptotics. For the few rigorous results (mostly in 1d), see \cite{norris_rogers_williams_87}, \cite{durrett_rogers_92}, \cite{cranston_lejan_95}, \cite{cranston_mountford_96} and in particular
\cite{mountford_tarres_08} which also contains a survey of the results. Recent 1d results appear in \cite{tarres_toth_valko_09}.

\medskip

Now, introducing the occupation time measure
\begin{equation}
\ell(t, A):=\ell(0,A) + \abs{\{0<s\le t: X(s)\in A\}}
\end{equation}
where $A\subset \R^d$ is any measurable domain, and $\ell(0,A)$ is some signed initialization, we can rewrite the SDE \eqref{Brpolydiff} as follows:
\begin{equation}
\label{Brpolydiff2} \d X(t)= \d B(t) - \grad \big(V*\ell
(t,\cdot)\big)(X(t))\,\d t
\end{equation}
where $*$ stands for convolution in $\R^d$. We assume that $\ell(0,A)$ is a signed Borel measure on $\R^d$ with slow increase: for any $\varepsilon>0$
\begin{equation}
\label{slowincreaseinitially}
\lim_{N\to\infty} N^{-(d+\vareps)}\abs{\ell}(0,[-N,N]^d)=0.
\end{equation}
The form \eqref{Brpolydiff2}, compared with \eqref{law}, shows explicitly the phenomenological similarity of the two models.

Non-rigorous (but nevertheless convincing) scaling and renormalization group arguments originally formulated for the MSAW, but equally well applicable to the SRBP suggest the following dimension-dependent asymptotic scaling behaviour
(see e.g.\ \cite{amit_parisi_peliti_83}, \cite{obukhov_peliti_83}, \cite{peliti_pietronero_87}):

\begin{enumerate}[--]
\item
in $d=1$: $X(t)\sim t^{2/3}$, with intricate, non-Gausssian scaling limit;
\item
in $d=2$: $X(t)\sim t^{1/2}(\log t)^\zeta$, with some controversy about the value of the exponent $\zeta$ in the logarithmic correction, and Gaussian (that is Wiener) scaling limit expected;
\item
in $d\ge3$: $X(t)\sim t^{1/2}$, with Gaussian (i.e.\ Wiener) scaling limit
expected.
\end{enumerate}

In $d=1$, for some particular cases of the MSAW model (MSAW with edge, rather than site repulsion and MSAW with continuous time and site repulsion), the limit theorem for $X(n)/n^{2/3}$ was established in \cite{toth_95}, respectively, \cite{toth_veto_09}, with the truly intricate limiting
distribution identified. The scaling limit of the \emph{process}  $t\mapsto X(nt)/n^{2/3}$ was constructed and analyzed in \cite{toth_werner_98}. The proofs in \cite{toth_95} and \cite{toth_veto_09} have some built-in combinatorial elements which make it difficult (if possible at all) to extend these proofs robustly to a full class of 1d models of random motions pushed by the negative gradient of their occupation time measure. However, more recently, a robust proof was given for the super-diffusive behaviour of the 1d models: in
\cite{tarres_toth_valko_09}, inter alia, it is proved that for the $1d$ SRBP models $\varliminf_{t\to\infty}t^{-5/4}\expect{X(t)^2}>0$,
$\varlimsup_{t\to\infty}t^{-3/2}\expect{X(t)^2}<\infty$. These are robust super-diffusive  bounds (not depending on microscopic details) but still far from the expected $t^{2/3}$ scaling.

In $d=2$, very little is proved rigorously. For a modified version of MSAW where self-repulsion acts only in one spatial (say, the horizontal) direction, the marginally super-diffusive lower bound $\varliminf_{t\to\infty} t^{-1}(\log t)^{-1/2} \expect{X(t)^2}>0$ holds, cf.\ \cite{valko_09}.

In the present paper, we address the $d\ge3$ case of the SRBP model. We identify a stationary and ergodic distribution of the environment as seen from the position of the moving point and in this particular stationary regime, we prove \emph{diffusive limit} (that is non-degenerate CLT with normal scaling) for the displacement. Our general approach is that of martingale approximation for additive functionals of ergodic Markov processes, initiated for reversible processes in the classic Kipnis\,--\,Varadhan paper \cite{kipnis_varadhan_86} and extended to non-reversible cases in \cite{toth_86}, \cite{varadhan_96}, \cite{sethuraman_varadhan_yau_00}. We shall refer to this approach as the \emph{Kipnis\,--\,Varadhan theory}. In particular, validity of the efficient martingale approximation will rely on checking the \emph{graded sector condition} of \cite{sethuraman_varadhan_yau_00}.

Similar results for the MSAW model on the lattice $\Z^d$, $d\ge3$, will be presented in \cite{horvath_toth_veto_10}

Next, we describe our results in plain words. For precise formulations, see subsection \ref{ss:setup_and_results}.

As a first step, we note that the environment profile appearing on the right-hand side of \eqref{Brpolydiff}, \eqref{Brpolydiff2}, as seen in a moving coordinate frame tied to the current position of the process, $t\mapsto\eta(t,\cdot)$:
\begin{align}
\label{envir}
\eta(t,x)
:=&\,\,
\eta(0,X(t)+x)+
\int_0^t V(X(t)+x-X(u))\,\d u
\\[2pt]
\notag
=&\,\,
\eta(0,X(t)+x)+\big(V*\ell (t,\cdot)\big)(X(t)+x)
\end{align}
is a Markov process in a properly chosen function space $\Omega$, to be specified later. As a first step, we identify a natural \emph{time-stationary and ergodic distribution} of this process \eqref{envir}. Rather surprisingly, this is the Gaussian (scalar) field $x\mapsto\omega(x)\in\R$ with expectation and covariances
\begin{equation}
\label{cov} \expect{\omega(x)}=0, \qquad C(x-y):= \expect{\omega(x)\omega(y)} =
g*V(x-y)
\end{equation}
where
\begin{equation}
\label{greenf} g:\R^d\to\R, \qquad g(x):= \abs{x}^{2-d}
\end{equation}
is the Green function of the Laplacian in $\R^d$. Note that throughout this paper $d\ge3$. This is the \emph{massless free Gaussian field} whose ultraviolet singularity is smeared out by convolution with the smooth and rapidly decaying approximate identity $V$. The Fourier transform of the covariance is
\begin{equation}
\label{covarFour}
\wh C(p)=\abs{p}^{-2} \wh V(p).
\end{equation}
See Theorem \ref{thm:stat_erg}. All further results will be meant for the process being in this stationary regime. From this result, by ergodicity, the law of large numbers for the process $X(t)$ drops out, see Corollary \ref{cor:lln}.

The main result of the paper refers to the \emph{diffusive limit} of the process $t\mapsto X(t)$. From \eqref{Brpoly} and \eqref{envir}, it arises that the displacement is written as
\begin{equation}
\label{displ} X(t)= B(t) + \int_0^t\varphi(\eta(s))\,\d s
\end{equation}
where $\varphi:\Omega\to\R^d$ is a function of the state of the stationary and ergodic Markov process $t\mapsto\eta(t)$:
\begin{equation}
\label{phidef}
\varphi(\omega)= -\grad \omega(0).
\end{equation}
So, the natural approach to the diffusive limit of $X(t)$ is the Kipnis\,--\,Varadhan theory. We will prove validity of an efficient martingale approximation by checking Sethuraman\,--\,Varadhan\,--\,Yau's \emph{graded sector condition}, cf.\ \cite{sethuraman_varadhan_yau_00}. It is easy to see that due to the spherical symmetry of the problem, we get
\begin{equation}
\label{sphe}
\expect{X_k(t)X_l(t)}
=
\delta_{k,l}d^{-1}\expect{\abs{X(t)}^2}.
\end{equation}
We prove that for $d\ge3$, the limiting variance
\begin{equation}
\label{variance}
\sigma^2:=d^{-1}\lim_{t\to\infty}t^{-1}\expect{\abs{X(t)}^2}\in(0,\infty)
\end{equation}
exists and the finite dimensional marginals of the diffusively rescaled process
\begin{equation}
\label{rescaled}
X_N(t)
:=
\frac{X(Nt)}{\sigma \sqrt N}
\end{equation}
converge to those of a standard $d$-dimensional Brownian motion. See Theorem \ref{thm:clt}. The main result shows similarity in spirit and techniques with those of \cite{komorowski_olla_03}, but the differences are also clear.

\bigskip

The results are meant \emph{in probability with respect to the initial profile} $\eta(0,x)$ sampled from the stationary (and ergodic) initial distribution hinted at above. Recent results by Cuny and Peligrad \cite{cuny_peligrad_09} raise the hope that the Kipnis\,--\,Varadhan theory could be enhanced to CLT
for \emph{almost all} initial conditions sampled according to the stationary distribution.

\bigskip

The rest of the paper is structured as follows: in section
\ref{s:setup_and_results}, we give the formal definitions, introduce notations, identify the stationary measure and formulate our main results precisely. In section \ref{s:spaces_and_operators}, we give the functional analytic
background: the Hilbert spaces and the (bounded and unbounded) linear operators involved and the infinitesimal generator will be presented. Ergodicity and LLN for the displacement drop out for free. The short section \ref{s:KV} is devoted to recalling the martingale approximation in Kipnis\,--\,Varadhan theory and the \emph{graded sector condition} of \cite{sethuraman_varadhan_yau_00}. Finally, in section \ref{s:proof}, we check the abstract functional analytic conditions for our particular problem, and we conclude with the proof of the CLT for the displacement.

\section{Formal setup and results}
\label{s:setup_and_results}


\subsection{The stationary measure}
\label{ss:stat_meas}

We start with the \emph{Ansatz} that the stationary distribution of the process $t\mapsto\eta(t,\cdot)$ is translation invariant zero mean Gaussian scalar  with some covariance
\begin{equation}
\label{ansatzcov}
\expect{\eta(t,x)\eta(t,y)}=C(y-x),
\end{equation}
to be identified at the end of the following computations.

In order to prove that this is indeed time-stationary, we have to show that for any test function $x\to u(x)\in\R$, the moment generating functional
\begin{equation}
\phi(t,u):=\expect{\exp\{\la u,\eta(t)\ra\}}
\end{equation}
is actually constant in time. In the present subsection, we use the notation
\begin{equation}
\la u, v\ra := \int_{\R^d} v(x)u(x)\,\d x.
\end{equation}

In the forthcoming computations of the present section all \emph{repeated subscripts} are summed from $1$ to $d$. Using \eqref{envir}, note that, by standard It\^o calculus,
\begin{align}
\label{ito} \d\la u, \eta(t)\ra = -  \la \partial_l u, \eta(t)\ra\,\d B_l(t) +
\frac12 \la \partial^2_{ll} u, \eta(t)\ra\,\d t - \la \partial_l u, \eta(t) \ra
\partial_l\eta(t,0) \,\d t + \la u, V\ra \,\d t.
\end{align}
Hence
\begin{align}
\label{condchar}
&
\condexpect{\d\exp\{\la u,\eta(t)\ra\} } {\cF_t}
=
\\[3pt]
\notag
&\qquad
\exp\{\la u,\eta(t)\ra\}
\left(
\frac12
\la \partial^2_{ll}u, \eta(t)\ra
+
\frac12
\la \partial_l u, \eta(t)\ra^2
-
\la \partial_l u, \eta(t) \ra \partial_l\eta(t,0)
+
\la u, V \ra \right)\d t.
\end{align}
Now, using the Ansatz that $x\mapsto\eta(t,x)$ (with $t$ fixed!) is a Gaussian field with covariance \eqref{ansatzcov}, by standard computations of Gaussian expectations, from \eqref{condchar}, we obtain
\begin{align}
\label{char}
\frac{\d \expect{\exp\{\la u,\eta(t)\ra\}}}{\d t}
=
&
\exp\{\la u, C*u\ra/2\}
\Big(
\frac12
\la \partial^2_{ll}u, C*u\ra
+
\frac12
\la \partial_l u, C*\partial_l u\ra
+
\\
\notag
&
+
\frac12
\la \partial_l u, C*u\ra^2
-
\la \partial_{l} u,\partial_{l} C\ra
-
\la \partial_l u, C*u\ra \la u, \partial_lC\ra
+
\la u, V\ra
\Big)\,\d t.
\end{align}
On the right-hand side the first two terms cancel out by an integration by parts. The third and fifth terms cancel one by one due to the simple fact that for any test function $u$,
\begin{equation}
\la \partial_l u, C*u\ra=0.
\end{equation}
Thus, the right-hand side of \eqref{char} is canceled out completely iff
\begin{equation}
\label{condition}
V = - \partial^2_{ll}C.
\end{equation}
This is equivalent to \eqref{cov}, \eqref{covarFour}. Note that $d\ge3$ was assumed.


\subsection{Formal setup and results}
\label{ss:setup_and_results}


\subsubsection{State space and Gaussian measure}

The proper state space of our basic processes will be the space of smooth scalar fields of slow increase at infinity:
\begin{equation}
\label{Omega}
\Omega := \big\{\omega\in C^{\infty}(\R^d\to\R)\,:\,
\norm{\omega}_{m,r}<\infty \big\}
\end{equation}
where $\norm{\omega}_{m,r}$ are the seminorms
\begin{equation}
\label{seminorms}
\norm{\omega}_{m,r} := \sup_{x\in\R^d} \,
\big(1+\abs{x}\big)^{-1/r} \, \abs{\partial^{\abs{m}}_{m_1,\dots,m_d}\omega(x)}
\end{equation}
defined for the multiindices $m=(m_1,\dots,m_d)$, $m_j\ge0$; and $r\ge1$. The space $\Omega$ endowed with these seminorms is a Fr\'echet space.

From Minlos's theorem (see \cite{simon_74}), it follows that there exists a unique Gaussian probability measure $\pi(\d\omega)$ on the space of tempered distributions $\cS^{\prime}(\R^d\to\R)$ with characteristic functional
\begin{equation}
\label{charfctnl}
\expect{\exp\{i\la u,\omega\ra\}}=
\exp\left\{-\frac12\int_{\R^d}\int_{\R^d} u(x)C(x-y)u(y)\,\d x \d y\right\},
\end{equation}
and from smoothness of the covariance function $C(x)$, it follows that the probability measure $\pi(\d\omega)$ is actually concentrated on the space
$\Omega\subset\cS^{\prime}(\R^d\to\R)$ and \eqref{cov} holds.

The Gaussian field $\omega(x)$ is realized e.g. as a moving average of white noise:
\begin{equation}
\omega(x)=\int_{\R^d}U(x-y)w(y)\,\d y,
\end{equation}
where $U$ is the unique positive definite function for which $U*U=V$ and $w$ is $d$-dimensional white noise.

The group of spatial translations
\begin{equation}
\label{shift}
\R^d\ni z\mapsto \tau_z:\Omega\to\Omega,
\qquad
(\tau_z\omega)(x):=\omega(x+z)
\end{equation}
acts naturally on $\Omega$ and preserves the probability measure $\pi(\d\omega)$. Actually, the dynamical system $(\Omega, \pi(\d\omega), \tau_z:z\in\R^d)$ is \emph{ergodic}.


\subsubsection{Processes}

First, we consider the process $t\mapsto(X(t),
\zeta(t,\cdot))\in\R^d\times\Omega$ defined as follows:
\begin{align}
\label{Xeq}
X(t)
&=
X(0) +  B(t) - \int_0^t\grad\zeta(s,X(s))\,\d s,
\\[5pt]
\label{zetaeq}
\zeta(t,x)
&=
\zeta(0,x)+\int_0^t V (x-X(s))\,\d s
\end{align}
where $t\mapsto B(t)$ is a standard $d$-dimensional Brownian motion, and $X(0)\in\R^d$, $\zeta(0,\cdot)\in\Omega$ are the initial data for the process $t\mapsto(X(t),\zeta(t,\cdot))$.

Written as a single SDE for the process $t\mapsto X(t)$, we get from \eqref{Xeq}, \eqref{zetaeq}:
\begin{equation}
\label{Xeq2}
X(t)
=
X(0) +  B(t) +
\int_0^t\left\{\zeta(0,X(s))+
\int_0^s F(X(s)-X(u))\,\d u\right\}\d s.
\end{equation}
The SDE \eqref{Xeq2} differs from the original SDE \eqref{Brpoly} only by the presence of the initial profile $\zeta(0,x)$, which is a natural modification.

From \eqref{Xeq} and \eqref{zetaeq}, it follows that
\begin{align}
\label{Xeqctd}
X(t_0+t)
&=
X(t_0) +  \big(B(t+t_0)-B(t_0)\big) -
\int_{t_0}^{t_0+t}\grad\zeta(s,X(s))\,\d s,
\\[5pt]
\label{zetaeqctd}
\zeta(t_0+t,x)
&=
\zeta(t_0,x)+\int_{t_0}^{t_0+t}
V(x- X(s))\,\d s.
\end{align}
From this form, it is apparent that the process
$t\mapsto(X(t),\zeta(t,\cdot))\in\R^d\times\Omega$ is Markovian.

The environment profile as seen from the moving point $X(t)$ is \begin{equation}
\label{etadef}
x\mapsto \eta(t,x):=\zeta(t, X(t)+x).
\end{equation}
From \eqref{Xeqctd}, \eqref{zetaeqctd}, we readily obtain that $t\mapsto \eta(t):=\eta(t,\cdot)$ is itself a Markov process on the state space $\Omega$.

We define the function $\varphi:\Omega\to\R^d$ by \eqref{phidef}, and from \eqref{Xeq}, \eqref{zetaeq}, \eqref{etadef}, we readily get \eqref{displ}.


\subsubsection{Results}

\begin{theorem}
\label{thm:stat_erg}
The Gaussian probability measure $\pi(\d\omega)$ on
$\Omega$, with mean $0$ and covariances \eqref{cov} is time-invariant and ergodic for the $\Omega$-valued Markov process $t\mapsto\eta(t)$.
\end{theorem}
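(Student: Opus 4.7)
The plan is to handle time-invariance and ergodicity as separate matters, since invariance has already been derived formally in subsection \ref{ss:stat_meas} and the main work is to justify that derivation rigorously.

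For time-invariance I would first establish well-posedness of the coupled SDE \eqref{Xeq}--\eqref{zetaeq} in $\R^d\times\Omega$ started from $\pi$-typical initial data. Since $V\in C^\infty$ has bounded derivatives of all orders, the convolution $V*\ell(t,\cdot)$ is smooth with controlled growth, and the slow-increase hypothesis \eqref{slowincreaseinitially} together with Gaussian tail estimates on $\omega\sim\pi$ suffice to localize the SDE via exit times of large balls. With the process defined, fix $u$ in the Schwartz class and apply It\^o's formula to $\la u,\eta(t)\ra$ via the chain rule for $\eta(t,x)=\zeta(t,X(t)+x)$; this reproduces the decomposition \eqref{ito}. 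Taking expectations with $\eta(t)\sim\pi$ and using the Wick calculus for Gaussian fields with covariance $C$, the drift and quadratic-variation terms collapse into the combination displayed in \eqref{char}, which cancels identically iff $-\Delta C=V$. Since this is the defining identity of $C=g*V$ (equivalently $\wh C(p)=|p|^{-2}\wh V(p)$), the characteristic functional $\expect{\exp(i\la u,\eta(t)\ra)}$ is constant in $t$, and because $\{\exp(i\la u,\cdot\ra):u\in\cS\}$ is measure-determining on $\Omega$, this identifies the law of $\eta(t)$ with $\pi$ for every $t$.

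For ergodicity I would lean on the Hilbert-space framework promised in Section \ref{s:spaces_and_operators}. The space $L^2(\Omega,\pi)$ is the Fock space over a Gaussian Hilbert space, with chaos decomposition $L^2(\Omega,\pi)=\bigoplus_{n\ge0}\cH_n$, and the infinitesimal generator $\cL$ of $\eta(\cdot)$ splits as $\cL=S+A$, with $S=\tfrac12(\cL+\cL^*)$ symmetric and $A$ antisymmetric in $L^2(\pi)$. The symmetric part $S$ originates in the martingale increment $\nabla_x\eta\cdot\d B$ of the It\^o expansion of $\eta(t,x)$, and on each chaos sector $\cH_n$, $n\ge1$, it is strictly negative; equivalently, the associated Dirichlet form on cylinder functions vanishes only on constants. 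Hence $\ker S=\R$, which forces $\ker\cL=\R$ as well (since any time-invariant function must be $S$-harmonic), and this is equivalent to $\pi$-ergodicity of $t\mapsto\eta(t)$.

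The main obstacle, in my view, is the Gaussian factorization step underpinning invariance: the product $\la\partial_l u,\eta(t)\ra\,\partial_l\eta(t,0)$ appearing in \eqref{ito} is a genuinely nonlinear observable on Gaussian space, and one must justify both its integrability and the Wick-contraction identity that makes the derivative of the characteristic functional collapse to \eqref{char}. By contrast, the triviality of $\ker S$ for ergodicity should be a routine consequence of the Dirichlet-form analysis of Section \ref{s:spaces_and_operators}, once the chaos-by-chaos structure of $\cL$ is laid out.
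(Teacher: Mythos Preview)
Your proposal is correct and tracks the paper's own argument: invariance via the It\^o--Gaussian computation of subsection~\ref{ss:stat_meas} culminating in \eqref{char} and \eqref{condition}, and ergodicity via $\ker S=\{\text{constants}\}$. The only cosmetic difference is that the paper phrases the last step as $\cD(f)=\tfrac12\sum_l\|\nabla_l f\|^2=0\Rightarrow f$ is shift-invariant and then invokes ergodicity of the spatial translations $(\tau_z)$ on $(\Omega,\pi)$, whereas you argue directly that $S=-\tfrac12\Delta$ has trivial kernel on each chaos sector $\cH_n$, $n\ge1$; these are equivalent formulations of the same fact.
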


\begin{corollary}
\label{cor:lln}
For $\pi$-almost all initial profiles $\zeta(0,\cdot)$,
\begin{equation}
\label{lln2}
\lim_{t\to\infty}\frac{X(t)}{t}=0
\quad
\mathrm{a.s.}
\end{equation}
\end{corollary}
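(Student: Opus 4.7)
The plan is to use the representation \eqref{displ} of the displacement, handle the Brownian part by the classical law of large numbers for Brownian motion, and handle the drift part by Birkhoff's ergodic theorem applied to the stationary and ergodic environment process furnished by Theorem \ref{thm:stat_erg}. Concretely, I would start from $\zeta(0,\cdot)\sim\pi$, so that by Theorem \ref{thm:stat_erg} the environment $t\mapsto\eta(t)$ is a stationary and ergodic $\Omega$-valued Markov process under $\pi$. Writing \eqref{displ} as
\[
\frac{X(t)}{t} \;=\; \frac{B(t)}{t} \;+\; \frac{1}{t}\int_0^t \varphi(\eta(s))\,\d s,
\]
the first summand tends to $0$ almost surely by the standard Brownian LLN.

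For the second summand I would invoke Birkhoff's ergodic theorem for the stationary ergodic flow $t\mapsto\eta(t)$. To do so I need $\varphi\in L^1(\pi;\R^d)$ and the value of its $\pi$-mean. Since $\omega\mapsto\varphi(\omega)=-\grad\omega(0)$ is a continuous linear functional on $\Omega$ and $\pi$ is a centered Gaussian measure, each component of $\varphi$ is a centered Gaussian random variable, with variance $-\partial_k^2 C(0)$. Smoothness and decay of $V$ make $C=g*V$ smooth on all of $\R^d$ (the singularity of $g$ at the origin is removed by convolution with $V\in\cS$), so $\partial_k^2 C(0)$ is finite, $\varphi$ has moments of every order, and in particular $\int_\Omega \varphi\,\d\pi=0$. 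Birkhoff then delivers $t^{-1}\int_0^t\varphi(\eta(s))\,\d s\to 0$ almost surely, and combining the two pieces yields \eqref{lln2}. A Fubini argument passes from joint almost sure convergence in the product space of initial profile and driving Brownian motion to the ``for $\pi$-a.e.\ initial profile, a.s.'' statement in the corollary.

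If I had to single out a main obstacle it would be the verification that $\varphi$ lies in $L^1(\pi)$ with vanishing mean, but this is immediate from the Gaussian structure of $\pi$ together with the smoothness of $V$ imposed in \eqref{FisgradV}. In effect there is no substantial difficulty: Corollary \ref{cor:lln} is a direct consequence of the stationarity and ergodicity established in Theorem \ref{thm:stat_erg}, combined with the fact that $\varphi$ is a centered Gaussian random vector under $\pi$.
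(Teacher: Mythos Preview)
Your proposal is correct and follows exactly the approach the paper takes: the paper's proof is the single sentence ``Corollary~\ref{cor:lln} follows directly from \eqref{displ} by the ergodic theorem,'' and you have spelled out precisely what that sentence means, including the split into the Brownian part and the additive functional, the verification that $\varphi$ is centered Gaussian under $\pi$ (hence integrable with zero mean), and the application of Birkhoff's theorem to the stationary ergodic environment process.
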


\medskip
\noindent {\bf Remarks:} It is clear that, in dimensions $d\ge3$, other stationary distributions of the process $t\mapsto\eta(t)$ exist. In particular, due to transience of the process $t\mapsto X(t)$, the stationary measure (presumably) reached from starting with ``empty'' initial conditions $\eta(0,x)\equiv0$ certainly differs from our $\d\pi$. Our methods and results are valid for the particular stationary distribution $\d\pi$.

\bigskip

The main result of the present paper is the following theorem:

\begin{theorem}
\label{thm:clt} In dimensions $d\ge3$, the following hold:
\\
(i)
The limiting variance
\begin{equation}
\label{variance2}
\sigma^2:=d^{-1}\lim_{t\to\infty}t^{-1}\expect{\abs{X(t)}^2}
\end{equation}
exists and
\begin{equation}
\label{bounds}
1 \le \sigma^2 \le 1+\rho^2
\end{equation}
where
\begin{equation}
\label{sumcond}
\rho^2:= d^{-1}\int_{\R^d} \abs{p}^{-2}\wh V(p)\,\d p<\infty.
\end{equation}
(ii)
The finite dimensional marginal distributions of the diffusively rescaled process
\begin{equation}
\label{rescaled2}
X_N(t)
:=
\frac{X(Nt)}{\sigma \sqrt N}
\end{equation}
converge to those of a standard $d$-dimensional Brownian motion. The convergence is meant in probability with respect to the starting state $\eta(0)$ sampled according to $\d\pi$. 
\end{theorem}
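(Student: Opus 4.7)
The representation \eqref{displ} presents $X(t)$ as the sum of the Brownian martingale $B(t)$ and an additive functional of the stationary, ergodic Markov process $\eta(\cdot)$ (Theorem \ref{thm:stat_erg}). The plan is to apply the non-reversible Kipnis\,--\,Varadhan martingale approximation theory, reducing $\int_0^t\varphi(\eta(s))\,\d s$ to a martingale plus a negligible $L^2$ error, and then to conclude via the functional martingale CLT. The key analytic input is the graded sector condition of \cite{sethuraman_varadhan_yau_00}, verified on the Hermite\,--\,Wiener chaos decomposition of $L^2(\pi)$.

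First, exploit the Gaussian nature of $\pi$: the white noise representation $\omega=U*w$ yields the orthogonal decomposition $L^2(\Omega,\pi)=\bigoplus_{n\ge 0}\cH_n$ into Wiener chaoses. Compute the infinitesimal generator $L$ of $\eta(\cdot)$ by applying It\^o's formula to cylindrical observables, using $\d X=\d B-\grad\omega(0)\,\d t$ and the feeding term $V(x)\,\d t$. The result splits canonically as $L=-S+A$, where the symmetric part $S$ (arising from the quadratic variation $\frac12\lap$) is essentially a Gaussian Ornstein\,--\,Uhlenbeck operator, preserving each $\cH_n$. The antisymmetric part $A$, arising from the quadratic\,--\,in\,--\,$\omega$ drift $-\grad\omega(x)\cdot\grad\omega(0)$ and the source $V$, admits a graded decomposition $A=A_++A_-$ with $A_\pm:\cH_n\to\cH_{n\pm1}$ mutually adjoint.

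Next, verify the graded sector bound
\be
\norm{A_\pm\psi}_{-1}^2\le C\,(n+1)\norm{\psi}_1^2,\qquad\psi\in\cH_n,
\ee
with $\norm{\cdot}_{\pm1}^2=\la\cdot,S^{\pm1}\cdot\ra$. In the Fock representation, $A_\pm$ have explicit integral kernels built from $V$ and $\grad$; the bound reduces to convolution estimates against the Gaussian covariance, and the integrability \eqref{sumcond}, which holds precisely when $d\ge3$, is the decisive input. Since $\varphi\in\cH_1$ with $\norm{\varphi}_{-1}^2$ bounded by a constant times $\rho^2$, the hypotheses of \cite{sethuraman_varadhan_yau_00} hold and produce a square-integrable martingale $M(t)$, adapted to the joint filtration of $(B,\eta(0))$ with stationary ergodic increments, such that
\be
\int_0^t\varphi(\eta(s))\,\d s=M(t)+R(t),\qquad t^{-1}\expect{\abs{R(t)}^2}\longrightarrow 0.
\ee

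Finally, $X(t)=B(t)+M(t)+R(t)$, so $B+M$ is a martingale with stationary ergodic increments; the functional martingale CLT then delivers convergence of the finite-dimensional distributions of $X_N$ (defined in \eqref{rescaled2}) to those of a standard Brownian motion, in probability with respect to $\eta(0)\sim\pi$. Spherical symmetry gives \eqref{sphe} and the existence of $\sigma^2$; the upper bound $\sigma^2\le 1+\rho^2$ follows from the Kipnis\,--\,Varadhan variational characterization of the asymptotic variance evaluated at the zero corrector, while the lower bound $\sigma^2\ge 1$ reflects the fact that the Brownian contribution to the quadratic variation of the limiting martingale cannot be reduced by the non-reversible correction (in the spirit of the Einstein relation). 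The principal obstacle is the graded sector estimate: one must identify $A_\pm$ explicitly in Fock space and extract the correct $n$-dependence in the kernel bound, with the restriction $d\ge 3$ entering essentially through the integrability of $\abs{p}^{-2}\wh V(p)$ at $p=0$. The overall strategy is close in spirit to that of \cite{komorowski_olla_03}.
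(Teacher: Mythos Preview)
Your overall strategy---Kipnis\,--\,Varadhan martingale approximation plus the graded sector condition of \cite{sethuraman_varadhan_yau_00}, verified on the Wiener chaos grading---is exactly the paper's route, and your treatment of the $H_{-1}$-bound and the upper bound $\sigma^2\le 1+\rho^2$ is correct in substance. One minor slip: the symmetric part $S=-\tfrac12\Delta$ with $\Delta=\sum_l\nabla_l^2=\sum_l(\d\Gamma(\partial_l))^2$ is \emph{not} an Ornstein\,--\,Uhlenbeck operator (it is not the number operator; the paper explicitly remarks that $\Delta$ is not the second-quantized Laplacian). This does not damage the argument, since all you need is that $S$ preserves each $\cH_n$.

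There is, however, a genuine gap in your lower bound $\sigma^2\ge1$. The assertion that ``the Brownian contribution to the quadratic variation of the limiting martingale cannot be reduced by the non-reversible correction'' is precisely the point requiring proof, and invoking the Einstein relation does not establish it. The Kipnis\,--\,Varadhan martingale $M(t)$ is, in this purely diffusive model, a stochastic integral against the same Brownian motion $B$, so $B+M$ can in principle have smaller variance than $B$; the paper explicitly cites the tagged particle in 1d nearest-neighbour symmetric exclusion as a case where complete cancellation occurs. The paper's argument is model-specific: by Yaglom reversibility $G^*=JGJ$ (the flipped time-reversed process has the same law as the forward one) together with the oddness of $\varphi$ under $\omega\mapsto-\omega$, the increment $B(t)-B(s)$ is simultaneously a \emph{forward} martingale in $t$ and a \emph{backward} martingale in $s$. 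This two-sided martingale property forces $B(t)-B(s)$ to be uncorrelated with $\int_s^t\varphi(\eta(u))\,\d u$, whence $\expect{\abs{X(t)}^2}=\expect{\abs{B(t)}^2}+\expect{\abs{\int_0^t\varphi}^2}\ge dt$. You should replace the heuristic remark with this forward/backward martingale argument.
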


Theorem \ref{thm:clt} will be proved by use of the martingale approximation of the Kipnis\,--\,Varadhan theory and the so-called \emph{graded sector condition} of \cite{sethuraman_varadhan_yau_00}.


\section{Spaces and operators}
\label{s:spaces_and_operators}

The natural formalism for the proofs of our theorems is that of Fock spaces and Gaussian Hilbert spaces, and linear operators over them. For basics of Gaussian Hilbert spaces and Wick products, see \cite{janson_97}, \cite{simon_74}. Our
main Hilbert space is $\cH:=\cL^2(\Omega,\pi)$. This is a Gaussian Hilbert space, and has very natural unitary equivalent representations as Fock spaces. We follow the usual notation of Euclidean quantum field theory, see e.g.\ \cite{simon_74}. In subsection \ref{ss:spaces}, we give formal definition of the three unitary equivalent representations of the Hilbert space
$\cL^2(\Omega,\pi)$. In subsection \ref{ss:operators}, we define the linear operators which are relevant for our purposes and we present their action on the three unitary equivalent formulations. In subsection \ref{ss:infgen}, the infinitesimal generator of the semigroup of the stationary Markov process
$t\mapsto\eta(t,\cdot)\in \Omega$, acting on $\cL^2(\Omega,\pi)$, and its adjoint is computed and the first consequences (ergodicity, LLN) are settled.


\subsection{Spaces}
\label{ss:spaces}

Throughout this paper, we use the convention of unitary Fourier transform
\begin{equation}
\label{FourierTransform}
\wh u(p):= (2\pi)^{-d/2}\int_{\R^d}e^{i p\cdot
x}u(x)\,\d x.
\end{equation}
and the shorthand notation
\begin{align}
\label{notation1}
&
\vx=(x_1,\dots,x_n)\in\big(\R^d\big)^n,
&&
x_m=(x_{m1},\dots,x_{md})\in\R^d,
&&
\partial_{ml}:=\frac{\partial}{\partial x_{ml}},
\\[5pt]
\label{notation3}
&
\vp=(p_1,\dots,p_n)\in\big(\R^d\big)^n,
&&
p_m=(p_{m1},\dots,p_{md})\in\R^d,
&&
\end{align}
$m=1,\dots,n,\,l=1,\dots,d$.

We denote by $\cS_n$, respectively, $\wh \cS_n$, the \emph{symmetric} Schwartz spaces
\begin{align}
\label{Sndef} \cS_n:= & \{u:\R^{dn}\to\C: u(\varpi\vx)=u(\vx),\,
\varpi\in\Perm(n)\},
\\[5pt]
\label{hSndef} \wh \cS_n:= & \{\wh u:\R^{dn}\to\C: \wh u(\varpi\vp)=\wh
u(\vp),\, \varpi\in\Perm(n)\}.
\end{align}
In the preceding formulas $\Perm(n)$ denotes the group of permutations on the $n$ indices.

The spaces $\cS_n$, respectively, $\wh \cS_n$ are endowed with the following scalar products
\begin{align}
\label{Knscprod}
&
\langle u,v\rangle:=
\int_{\R^{dn}}\int_{\R^{dn}}
\overline{u(\vx)}C(\vx-\vy)v(\vy) \,\d\vx\d\vy,
\\[5pt]
\label{hKnscprod}
&
\langle \wh u,\wh v\rangle:=
\int_{\R^{dn}}
\overline{\wh u(\vp)} \wh C(\vp) \wh v(\vp) \,\d\vp
\end{align}
where
\begin{equation}
\label{CnhCndef}
C(\vx-\vy):=\prod_{m=1}^n C(x_m-y_m),
\qquad
\wh C(\vp):=\prod_{m=1}^n \wh C(p_m).
\end{equation}
Let $\cK_n$ and $\wh \cK_n$ be the closures of $\cS_n$, respectively, $\wh \cS_n$ with respect to the Euclidean norms defined by these inner products. The Fourier transform \eqref{FourierTransform} realizes an isometric isomorphism
between the Hilbert spaces $\cK_n$ and $\wh \cK_n$.

These Hilbert spaces are actually the symmetrized $n$-fold tensor products
\begin{equation}
\label{KnhKn}
\cK_n:=\mathrm{symm}\big(\cK_1^{\otimes n}\big),
\qquad
\cK_n:=\mathrm{symm}\big(\wh\cK_1^{\otimes n}\big).
\end{equation}
Finally, the full Fock spaces are
\begin{equation}
\label{Fockspaces}
\cK:=\overline{\oplus_{n=0}^\infty \cK_n},
\qquad
\wh\cK:=\overline{\oplus_{n=0}^\infty \wh\cK_n}.
\end{equation}

The Hilbert space of our true interest is $\cH=\cL^2(\Omega,\pi)$. This is itself a graded Gaussian Hilbert space
\begin{equation}
\label{Hgraded} \cH=\overline{\oplus_{n=0}^\infty \cH_n}
\end{equation}
where the subspaces $\cH_n$ are isometrically isomorphic with the subspaces $\cK_n$ of $\cK$ through the identification
\begin{equation}
\label{HKisometry} \phi_n: \cK_n\to\cH_n, \quad
\phi_n(u):=\frac{1}{\sqrt{n!}}\int_{\R^{dn}}
u(\vx)\wick{\omega(x_1)\dots\omega(x_n)}\,\d\vx.
\end{equation}
Here and in the rest of this paper, we denote by \wick{X_1\dots X_n} the Wick product of the jointly Gaussian random variables $(X_1,\dots,X_n)$. In order to ease notation the mapping $\phi_1: \cK_1\to\cH_1$ will be simply denoted by $\phi$.

As the graded Hilbert spaces
\begin{equation}
\cH:=\overline{\oplus_{n=0}^\infty \cH_n}, \quad
\cK:=\overline{\oplus_{n=0}^\infty \cK_n}, \quad
\wh \cK:=\overline{\oplus_{n=0}^\infty \wh\cK_n}
\end{equation}
are isometrically isomorphic in a natural way, we shall move freely between the various representations.


\subsection{Operators}
\label{ss:operators}


\subsubsection{General notation}
\label{sss:general_notation}

We use the standard notation of Fock spaces. First, we give a general framework of notation and identities formulated over the Gaussian Hilbert space $\cH$. Then, we turn to our relevant linear operators and we give their representations in all three Hilbert spaces  $\cH$, $\cK$ and $\wh \cK$.

The action of linear operators over $\cH=\overline{\oplus_{n=0}^\infty\cH_n}$ will be typically given in terms of Wick monomials. It is understood that their
action is extended by linearity and graph closure.

Given a (bounded or unbounded) densely defined and closed linear operator $A$ over the basic Hilbert space $\cK_1$,  its second quantized version, acting over the graded Gaussian Hilbert space $\cH$ will be denoted by $\d\Gamma(A)$. This latter one acts over Wick monomials as follows,
$\d\Gamma(A):\cH_n\to\cH_n$,
\begin{equation}
\label{sqop}
\d\Gamma(A) \wick{\phi(v_1) \cdots \phi(v_n)} = \sum_{m=1}^n
\wick{\phi(v_1)\cdots \phi(Av_m) \cdots \phi(v_n)}.
\end{equation}

Given a vector $u$ from the basic Hilbert space $\cK_1$, the creation and annihilation (raising and lowering) operators associated to it, acting over the Gaussian Hilbert space $\cH$, will be denoted by $a^*(u):\cH_n\to\cH_{n+1}$, respectively, $a(u):\cH_n\to\cH_{n-1}$, acting on Wick monomials as
\begin{align}
\label{cropdef}
a^*(u)\wick{\phi(v_1)\dots\phi(v_n)}
&=\,\,\,
\wick{\phi(u)\phi(v_1)\dots\phi(v_n)},
\\[3pt]
\label{anopdef}
a(u)\wick{\phi(v_1)\dots\phi(v_n)}
&=\,\,\,
\sum_{m=1}^n\langle u,v_m\rangle
\wick{\phi(v_1) \dots \phi(v_{m-1}) \phi(v_{m+1}) \dots \phi(v_n)}.
\end{align}
For basics about creation, annihilation and second quantized operators, see e.g.\ \cite{simon_74} or \cite{janson_97}.

We also define the unitary involution $J$ on $\cH$:
\begin{equation}
\label{invop}
Jf(\omega):=f(-\omega),
\qquad
J\upharpoonright_{\cH_n}=(-1)^n I\upharpoonright_{\cH_n}.
\end{equation}

The well-known canonical commutation relations between the operators introduced are:
\begin{equation}
\label{ccr1}
[a(u),a(v)]=0,
\qquad
[a^*(u),a^*(v)]=0,
\qquad
[a(u),a^*(v)]=\langle u,v\rangle I,
\end{equation}
\begin{equation}
\label{ccr2} [\d\Gamma(A),a^*(u)]=a^*(Au), \qquad\qquad
[\d\Gamma(A),a(u)]=-a(A^*u),
\end{equation}
\begin{equation}
\label{Jcomm} [J,\d\Gamma(A)]=0, \qquad \{J,a^*(u)\}=0, \qquad \{J,a(u)\}=0.
\end{equation}

Two more operators will be needed: given an element $u\in\cK_1$, \emph{multiplication by} $\phi(u)$ will be denoted $M(u)$, that is, formally, for $f\in\cL^2(\Omega, \pi)$,
\begin{equation}
\label{mu}
\big(M(u) f\big)(\omega):= \phi(u)(\omega)f(\omega).
\end{equation}
Finally, for a fixed element $\vartheta\in\Omega$, we introduce \emph{differentiation in the direction} $\vartheta$: formally
\begin{equation}
\label{differ}
D_{\vartheta}f(\omega):=
\lim_{\varepsilon\to0}\varepsilon^{-1}
\big(f(\omega+\varepsilon\vartheta)-f(\omega)\big).
\end{equation}
Both operators are well-defined on Wick monomials, and are extended by linearity and graph closure.

Given $u\in\cK_1$ the identities \eqref{muidentity} and \eqref{differidentity} below hold:
\begin{enumerate}[(1)]
\item
The multiplication operator $M(u)$ is actually
\begin{equation}
\label{muidentity}
M(u)=a^*(u)+a(u).
\end{equation}
\item
If $C*u\in\Omega$ then
\begin{equation}
\label{differidentity}
D_{C*u}=a(u).
\end{equation}
\end{enumerate}
Both identities are checked by direct computation on Wick monomials. The identity \eqref{differidentity} is a particular case of the \emph{directional derivative} of Malliavin calculus, see \cite{janson_97}.


\subsubsection{Specific linear operators}
\label{sss:specific_operators}

The most relevant operators for our present purposes are
\begin{equation}
\label{operators1}
\nabla_l:=\d\Gamma(\partial_l),
\qquad
\Delta:=\sum_{l=1}^d\nabla_l^2,
\qquad
a_l:=a(\partial_l\delta_0),
\qquad
a^*_l:=a^*(\partial_l\delta_0)
\end{equation}
where $\partial_l=\frac{\partial}{\partial x_l}$ and $\delta_0$ is Dirac's delta concentrated on $0\in\R^d$. Note that $\delta_0$ and all its partial derivatives are in the Hilbert space $\cK_1$.

We give now their action on the spaces $\cH_n$, $\cK_n$ and $\wh\cK_n$. The point is that we are interested primarily in their action on the space $\cL^2(\Omega,\pi)=\overline{\oplus_{n=0}^\infty\cH_n}$, but explicit computations in later sections are handy in the unitary equivalent representations over the space $\wh\cK=\overline{\oplus_{n=0}^\infty\wh\cK_n}$. The action of various operators over $\cH_n$ will be given in terms of the Wick monomials $\wick{\omega(x_1)\dots\omega(x_n)}$ and it is understood that the operators are extended by linearity and graph closure.

\begin{itemize}

\item
The operators $\nabla_l$, $l=1,\dots,d$:
\begin{align}
\label{nablaonHn}
&
\nabla_l:\cH_n\to\cH_n,
&&
\nabla_l\wick{\omega(x_1)\dots\omega(x_n)}=
-\sum_{m=1}^n \wick{\omega(x_1)\dots\partial_l\omega(x_m)\dots\omega(x_n)},
\\[3pt]
\label{nablaonKn}
&
\nabla_l:\cK_n\to\cK_n,
&&
\nabla_l u(\vx)
=
\sum_{m=1}^n
\frac{\partial u}{\partial x_{ml}}(\vx),
\\[3pt]
\label{nablaonhKn}
&
\nabla_l:\wh\cK_n\to\wh\cK_n,
&&
\nabla_l \wh u(\vp)
=
i \big(\sum_{m=1}^n p_{ml}\big) \wh u(\vp).
\end{align}
Note that these are actually unbounded, closed, skew self-adjoint operators.  They are densely defined on $\cH_n$, $\cK_n$, respectively, $\wh\cK_n$.

\item
The operator $\Delta$:
\begin{align}
\label{DeltaonHn}
&
\Delta:\cH_n\to\cH_n,
&&
\Delta\wick{\omega(x_1)\dots\omega(x_n)}
=
\\[2pt]
\notag
&&&\hskip16mm
\sum_{l=1}^d\sum_{m,m'=1}^n
\wick{\omega(x_1) \dots \partial_{l}\omega(x_m) \dots \partial_{l}\omega(x_{m'}) \dots \omega(x_n)},
\\[3pt]
\label{DeltaonKn}
&
\Delta:\cK_n\to\cK_n,
&&
\Delta u(\vx)
=
\sum_{l=1}^d\sum_{m,m^{\prime}=1}^n
\frac{\partial^2 u}
{\partial x_{ml}\partial x_{m^{\prime}l}}(\vx),
\\[3pt]
\label{DeltaonhKn}
&
\Delta:\wh\cK_n\to\wh\cK_n,
&&
\Delta \wh u(\vp)
=
-\abs{\sum_{m=1}^n p_m}^2 \wh u(\vp).
\end{align}
The operator $\Delta$ is unbounded, densely defined, self-adjoint and positive. Note that $\Delta$ is \emph{not} the second quantized Laplacian.

\item
The operator $\abs{\Delta}^{-1/2}=(-\Delta)^{-1/2}$:
\begin{align}
\label{sqrtgreenoponHn}
&
\abs{\Delta}^{-1/2}:\cH_n\to\cH_n,
&&
\text{no explicit formula},
\\[3pt]
\label{sqrtgreenoponKn}
&
\abs{\Delta}^{-1/2}:\cK_n\to\cK_n,
&&
\text{no explicit formula},
\\[3pt]
\label{sqrtgreenoponhKn}
&
\abs{\Delta}^{-1/2}:\wh\cK_n\to\wh\cK_n,
&&
\abs{\Delta}^{-1/2} \wh u(\vp)
=
\abs{\sum_{m=1}^n p_m}^{-1} \wh u(\vp).
\end{align}
The operator $\abs{\Delta}^{-1/2}$ is unbounded, densely defined, self-adjoint and positive.

\item
The operators $\abs{\Delta}^{-1/2}\nabla_l$, $l=1,\dots,d$:
\begin{align}
\label{nonameopsonHn}
&
\abs{\Delta}^{-1/2}\nabla_l:\cH_n\to\cH_n,
&&
\text{no explicit formula},
\\[3pt]
\label{nonameopsonKn}
&
\abs{\Delta}^{-1/2}\nabla_l:\cK_n\to\cK_n,
&&
\text{no explicit formula},
\\[3pt]
\label{nonameopsonhKn} & \abs{\Delta}^{-1/2}\nabla_l:\wh\cK_n\to\wh\cK_n, &&
\abs{\Delta}^{-1/2}\nabla_l \wh u(\vp) = \frac{i\sum_{m=1}^n p_{ml}
}{\abs{\sum_{m=1}^n p_m}} \wh u(\vp).
\end{align}
These are \emph{bounded} skew self-adjoint operators with operator norm
\begin{equation}
\label{nonameopnorm}
\norm{ \abs{\Delta}^{-1/2}\nabla_l } =1.
\end{equation}

\item
The creation operators $a^*_l$, $l=1,\dots,d$:
\begin{align}
\label{astaronHn}
&
a^*_l:\cH_n\to\cH_{n+1},
&&
a^*_l\wick{\omega(x_1)\dots\omega(x_n)}
=
\wick{\partial_l\omega(0)\omega(x_1)\dots\omega(x_n)},
\\[5pt]
\label{astaronKn}
&
a^*_l:\cK_n\to\cK_{n+1},
&&
a^*_lu(x_1,\dots,x_{n+1})
=
\\[2pt]
\notag
&&&\hskip10mm
\frac{1}{\sqrt{n+1}}
\sum_{m=1}^{n+1}
\partial_l\delta(x_m) u(x_1,\dots, x_{m-1},x_{m+1},\dots, x_{n+1}) ,
\\[3pt]
\label{astaronhKn}
&
a^*_l:\wh\cK_n\to\wh\cK_{n+1},
&&
a^*_l\wh u(p_1,\dots,p_{n+1})
=
\\[2pt]
\notag
&&&\hskip18mm
\frac{1}{\sqrt{n+1}}
\sum_{m=1}^{n+1}
ip_{ml}\wh u(p_1,\dots, p_{m-1},p_{m+1},\dots, p_{n+1}).
\end{align}
The creation operators $a_l^*$, restricted to the subspaces $\cH_n$, $\cK_n$, respectively, $\wh\cK_n$ are bounded, with operator norm
\begin{equation}
\label{astaropnorm}
\norm{ a^*_l\upharpoonright_{\cH_n} \!\!\!\phantom{\Big|}}
=
\norm{ a^*_l\upharpoonright_{\cK_n} \!\!\!\phantom{\Big|}}
=
\norm{ a^*_l\upharpoonright_{\wh\cK_n} \!\!\!\phantom{\Big|}}
=
\sqrt{C(0)}
\sqrt{n+1}.
\end{equation}

\item
The annihilation operators $a_l$, $l=1,\dots,d$:
\begin{align}
\label{aonHn}
&
a_l:\cH_n\to\cH_{n-1},
&&
a_l\wick{\omega(x_1)\dots\omega(x_n)}
=
\\[2pt]
\notag &&& \hskip20mm \sum_{m=1}^n \partial_lC(x_m)
\wick{\omega(x_1)\dots\omega(x_{m-1}) \omega(x_{m+1}) \dots \omega(x_n)},
\\[5pt]
\label{aonKn} & a_l:\cK_n\to\cK_{n-1}, && a_lu(x_1,\dots,x_{n-1}) = \sqrt{n}
\int_{\R^d} u(x_1,\dots,x_{n-1},y) \partial_lC(y) \,\d y,
\\[3pt]
\label{aonhKn} & a_l:\wh\cK_n\to\wh\cK_{n-1}, && a_l\wh u(p_1,\dots,p_{n-1}) =
\sqrt{n} \int_{\R^d} \wh u(p_1,\dots,p_{n-1},q)iq_l\wh C(q) \,\d q.
\end{align}
The annihilation operators $a_l$ restricted to the subspaces $\cH_n$, $\cK_n$, respectively, $\wh\cK_n$ are bounded with operator norm
\begin{equation}
\label{astaropnorm}
\norm{ a_l\upharpoonright_{\cH_n}  \!\!\!\phantom{\Big|}}
=
\norm{ a_l\upharpoonright_{\cK_n} \!\!\!\phantom{\Big|}}
=
\norm{ a_l\upharpoonright_{\wh\cK_n} \!\!\!\phantom{\Big|}}
=
\sqrt{C(0)}\sqrt{n}.
\end{equation}
Furthermore, as the notation $a^*_l$ and $a_l$ suggests, these operators are adjoint of each other.
\end{itemize}

\bigskip

Since all computations will be performed in the representation $\wh\cK$, we give a common core for all the unbounded operators defined above -- and some others to appear in future sections:
\begin{equation}
\label{coredefin}
\wh\cC:=\oplus_{n=0}^\infty \wh\cC_n,
\qquad
\wh\cC_n:=
\{\wh u \in \wh\cK_n: \sup_{\vp\in\R^{dn}}\abs{\wh u(\vp)} < \infty \}.
\end{equation}
Note that the operator $\abs{\Delta}^{-1/2}$ is defined on the dense subspace $\wh\cC$  \emph{only for} $d\ge3$. Furthermore, in dimensions $d\ge3$, the operators $\abs{\Delta}^{-1/2}\upharpoonright_{\wh\cK_n}$ defined on the dense subspaces $\wh\cC_n$, are \emph{essentially self-adjoint}. This follows, e.g., from Propositions VIII.1, VIII.2 of \cite{reed_simon_vol1_80}.

Notice also that $\nabla$ is the infinitesimal generator of the \emph{unitary group of spatial translations} while $\Delta$ is the infinitesimal generator of the Markovian semigroup of \emph{diffusion in random scenery}
\begin{align}
\label{shiftgroup}
&
\exp\{z\nabla\}=T_z, \qquad
&&
T_zf(\omega):=f(\tau_z\omega),
\\[5pt]
\label{drscesemigroup}
&
\exp\{t\Delta\}=Q_t, \qquad
&&
Q_tf(\omega):=\int\frac{\exp\{-z^2/(2t)\}}{\sqrt{2\pi t}}
f(\tau_z\omega)\,\d z.
\end{align}


\subsection{The infinitesimal generator, stationarity,
\\ Yaglom reversibility, ergodicity}
\label{ss:infgen}

We denote by $P_t$ the semigroup of the process $\eta(t)$:
\begin{equation}
\label{semigroup}
P_t:\cH\to\cH,
\qquad
P_t f(\omega)
:=
\condexpect{f(\eta(t))}{\eta(0)=\omega}.
\end{equation}
Then $[0,\infty)\ni t\mapsto P_t\in\cB(\cH)$ is a Markovian contraction semigroup on $\cH$. In order to identify its infinitesimal generator, note that the infinitesimal change in the state of the Markov process $\eta(t)$ is due to the following three terms:
\begin{enumerate}[\quad(1)]\setlength{\itemsep}{0pt}
\item
infinitesimal spatial shift due to $\d B(t)$;
\item
infinitesimal spatial shift due to $-\grad\eta(t,0)\d t$;
\item
infinitesimal local change in $\eta$ due to increase of local time.
\end{enumerate}
Altogether
\begin{equation}
\label{ito2}
\eta(t+\d t,x)=
\eta(t, x+\d B(t) - \grad \eta(t,0) \d t) + V(x)\d t.
\end{equation}

Hence, given a sufficiently regular function on the state space $f:\Omega\to\R$, we compute
\begin{equation}
\label{infgencomp}
\lim_{t\to0}\frac{\condexpect{f(\eta(t)-f(\eta(0)))}{\eta(0)=\omega}}{t}
=
\left(
\frac12\Delta -
\sum_{l=1}^d M(\partial_l\delta_0) \nabla_l + D_{V}
\right)f(\omega).
\end{equation}
Recall \eqref{cov} and note that hence
\begin{equation}
V=-C*\sum_{l=1}^d\partial^2_{ll}\delta_0
\end{equation}
with
\begin{equation}
-\sum_{l=1}^d\partial^2_{ll}\delta_0\in\cK_1.
\end{equation}
Using \eqref{muidentity}, \eqref{ccr2} and \eqref{differidentity} (in this order), we readily obtain the following expression for the  infinitesimal generator of the semigroup $P_t$:
\begin{equation}
\label{infgen}
G
:=
\frac12\Delta +
\sum_{l=1}^d \big(a^*_l \nabla_l + \nabla_l a_l\big).
\end{equation}
This operator is well defined on Wick polynomials of the field $\omega(x)$ and is extended by linearity and graph closure. It is not difficult to see that it satisfies the criteria of the Hille\,--\,Yoshida theorem (see \cite{reed_simon_vol1_80}) and thus it is indeed the infinitesimal generator of a Markovian semigroup. We omit these technical details.

The adjoint generator is
\begin{equation}
\label{adjinfgen}
G^*
:=
\frac12\Delta -
\sum_{l=1}^d \big(a^*_l \nabla_l + \nabla_l a_l\big).
\end{equation}
Note that due to the inner coherence of the model the last two terms on the right hand side of \eqref{infgencomp} combine to give the tidy skew self-adjoint part of the infinitesimal generators in \eqref{infgen},
\eqref{adjinfgen}.

For later use, we introduce notation for the symmetric (self-adjoint) and anti-symmetric (skew self-adjoint) parts of the generator
\begin{align}
\label{symgen}
S
&:=
-\frac12(G+G^*)=  -\frac12 \Delta,
\\[5pt]
\label{Adecomp}
A
&:=
\phantom{-}\frac12(G-G^*)=
\sum_{l=1}^d \big(a^*_l \nabla_l + \nabla_l a_l\big)
=: A_++A_-.
\end{align}
It is a standard -- though not completely trivial -- exercise to check that the operators $S$ and $A$, a priori defined on the dense subspace $\wh\cC$ are indeed essentially self-adjoint, respectively, essentially skew self-adjoint.

Note that
\begin{equation}
\label{grading}
S:\cH_n\to\cH_n,
\quad
A_+:\cH_n\to\cH_{n+1},
\quad
A_-:\cH_n\to\cH_{n-1},
\quad
A_{\mp}=-A_{\pm}^*,
\end{equation}
and
\begin{equation}
\label{van_H0_H1}
S\upharpoonright_{\cH_0}=0,
\qquad
A_+\upharpoonright_{\cH_0}=0,
\qquad
A_-\upharpoonright_{\cH_0\oplus\cH_1}=0.
\end{equation}

It is clear that
\begin{equation}
\label{stateq}
G^*\one = 0,
\end{equation}
and hence, it follows that $\pi$ is indeed stationary distribution of the process $t\mapsto\eta(t)$ and $G^*$ is  itself the infinitesimal generator of the stochastic semigroup $P^*_t$ of the time-reversed process.

Actually, so-called \emph{Yaglom reversibility} holds. From \eqref{Jcomm}, it follows that
\begin{equation}
\label{yaglom}
G^{*}=JGJ.
\end{equation}
This identity means that the stationary forward process $(-\infty,\infty)\ni t\mapsto\eta(t)$ and the \emph{flipped backward process}
\begin{equation}
\label{revproc}
(-\infty,\infty)\ni t\mapsto\wt\eta(t):=-\eta(-t)
\end{equation}
obey the same law. This is a special kind of time-reversal symmetry called Yaglom reversibility, see \cite{yaglom_47}, \cite{yaglom_49}, \cite{dobrushin_suhov_fritz_88}.

Proving ergodicity is easy: The Dirichlet form of the process $t\mapsto\eta(t)$ is
\begin{equation}
\label{df}
\cD(f):=
-(f,G f)=
-\frac12 (f, \Delta f)=
\frac12 \sum_{l=1}^d\norm{\nabla_l f}^2.
\end{equation}
So,
\begin{equation}
\label{erg}
\big\{ \cD(f)=0 \big\}
\ \iff\
\big\{ \nabla_l f=0,\ l=1,\dots, l \big\}
\ \iff\
\big\{ f=\text{const.} \ \pi\text{-a.s.} \big\},
\end{equation}
since $z\mapsto\tau_z$ acts ergodically on $(\Omega,\pi)$.

This proves Theorem \ref{thm:stat_erg}. Corollary \ref{cor:lln} follows directly from \eqref{displ} by the ergodic theorem.


\section{CLT for additive functionals of ergodic Markov processes, graded sector condition}
\label{s:KV}

In the present short section we recall the non-reversible version of the Kipnis\,--\,Varadhan CLT for additive functionals of ergodic Markov processes and the \emph{graded sector condition} of Sethuraman, Varadhan and Yau, \cite{sethuraman_varadhan_yau_00}.

Let $(\Omega, \cF, \pi)$ be a probability space: the state space of a \emph{stationary and ergodic} Markov process  $t\mapsto\eta(t)$. We put ourselves in the Hilbert space $\cH:=\cL^2(\Omega, \pi)$. Denote the \emph{infinitesimal generator} of the semigroup of the process by $G$, which is
a well-defined (possibly unbounded) closed linear operator on $\cH$. The adjoint generator $G^*$ is the infinitesimal generator of the semigroup of the reversed (also stationary and ergodic) process $\eta^*(t)=\eta(-t)$. It is assumed that $G$ and $G^*$ have a \emph{common core of definition} $\cC\subseteq\cH$. Let $f\in\cH$, such that $(f, \one) = \int_\Omega f\,\d\pi=0$. We ask about CLT/invariance principle for
\begin{equation}
\label{rescaledintegral}
N^{-1/2}\int_0^{Nt} f(\eta(s))\,\d s
\end{equation}
as $N\to\infty$.

We denote the \emph{symmetric} and \emph{anti-symmetric} parts of the generators $G$, $G^*$, by
\begin{equation}
S:=-\frac12(G+G^*),
\qquad
A:=\frac12(G-G^*).
\end{equation}
These operators are also extended from $\cC$ by graph closure and it is assumed that they are well-defined self-adjoint, respectively, skew self-adjoint operators
\begin{equation}
S^*=S\ge0, \qquad A^*=-A.
\end{equation}
Note that $-S$ is itself the infinitesimal generator of a Markov semigroup  on $\cL^2(\Omega,\pi)$, for which the probability measure $\pi$ is reversible (not just stationary). We assume that $-S$ is itself ergodic:
\begin{equation}
\label{Sergodic}
\mathrm{Ker}(S)=\{c1\!\!1 : c\in\C\}.
\end{equation}

We denote by $R_\lambda\in\cB(\cH)$ the resolvent of the semigroup $s\mapsto e^{sG}$:
\begin{equation}
R_\lambda
:=
\int_0^\infty e^{-\lambda s} e^{sG}\d s
=
\big(\lambda I-G\big)^{-1}, \qquad \lambda>0,
\end{equation}
and given $f\in\cH$ as above, we will use the notation
\begin{equation}
u_\lambda:=R_\lambda f.
\end{equation}

The following theorem yields the efficient martingale approximation of the additive functional \eqref{rescaledintegral}:

\begin{theorem*}[\bf KV]
\label{thm:kv}
With the notation and assumptions as before, if the following two limits hold in $\cH$
\begin{align}
\label{conditionA}
&
\lim_{\lambda\to0}
\lambda^{1/2} u_\lambda=0,
\\[5pt]
\label{conditionB}
&
\lim_{\lambda\to0} S^{1/2} u_\lambda=:v\in\cH,
\end{align}
then
\begin{equation}
\label{kv_variance}
\sigma^2:=2\lim_{\lambda\to0}(u_\lambda,f)\in[0,\infty),
\end{equation}
and there exists a zero mean, $\cL^2$-martingale $M(t)$, adapted to the filtration of the Markov process $\eta(t)$ with stationary and ergodic increments and variance
\begin{equation}
\expect{M(t)^2}=\sigma^2t
\end{equation}
such that
\begin{equation}
\label{kv_martappr} \lim_{N\to\infty} N^{-1} \expect{\big(\int_0^N
f(\eta(s))\,\d s-M(N)\big)^2} =0.
\end{equation}
In particular, if $\sigma>0$, then the finite dimensional marginal distributions of the rescaled process $t\mapsto \sigma^{-1} N^{-1/2}\int_0^{Nt}f(\eta(s))\,\d s$ converge to those of a standard $1d$ Brownian motion.
\end{theorem*}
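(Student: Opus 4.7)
The plan is to follow the resolvent-based martingale approximation argument, adapted to the non-reversible setting. Fix $\lambda>0$ and set $u_\lambda:=R_\lambda f$, so that the resolvent equation reads $Gu_\lambda = \lambda u_\lambda - f$. Since $u_\lambda$ lies in the domain of $G$, standard Markov theory provides the zero-mean $\cL^2$-martingale
\begin{equation*}
M_\lambda(t) := u_\lambda(\eta(t)) - u_\lambda(\eta(0)) - \int_0^t Gu_\lambda(\eta(s))\,\d s
\end{equation*}
adapted to the natural filtration of $\eta$. Substituting the resolvent equation and rearranging yields the key decomposition
\begin{equation*}
\int_0^t f(\eta(s))\,\d s \;=\; M_\lambda(t) + u_\lambda(\eta(0)) - u_\lambda(\eta(t)) + \lambda\!\int_0^t u_\lambda(\eta(s))\,\d s,
\end{equation*}
which splits the additive functional into a martingale part, boundary terms, and a small-$\lambda$ drift correction.

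The next step is to control the $\cL^2(\P)$-norm of the martingale increments through the carré du champ identity. Since $(1,Gu_\lambda^2)=0$ by stationarity and $(u_\lambda,Au_\lambda)=0$ by skew-symmetry of $A$, one obtains
\begin{equation*}
\expect{M_\lambda(t)^2} = 2t\,(u_\lambda,Su_\lambda) = 2t\,\|S^{1/2}u_\lambda\|^2,
\qquad
\expect{(M_\lambda(t)-M_\mu(t))^2} = 2t\,\|S^{1/2}(u_\lambda-u_\mu)\|^2.
\end{equation*}
Hypothesis \eqref{conditionB} then makes $(M_\lambda(t))_\lambda$ Cauchy in $\cL^2(\P)$ for every fixed $t$, and Doob's maximal inequality extends the convergence to compact time intervals, producing a limiting zero-mean $\cL^2$-martingale $M(t)$ with stationary ergodic increments and $\expect{M(t)^2}=2t\|v\|^2$. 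The identification of $\sigma^2$ is then a one-line algebraic computation: $(u_\lambda,f) = (u_\lambda,(\lambda I - G)u_\lambda) = \lambda\|u_\lambda\|^2 + \|S^{1/2}u_\lambda\|^2$, which, by \eqref{conditionA} and \eqref{conditionB}, tends to $\|v\|^2$, so $\sigma^2 = 2\lim_{\lambda\downarrow0}(u_\lambda,f) = 2\|v\|^2$.

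Finally, I would show that the correction terms are $o(\sqrt N)$ along a well-chosen scale $\lambda = \lambda_N \downarrow 0$, for instance $\lambda_N = 1/N$. Stationarity of $\pi$ under the process gives
\begin{equation*}
\big\|u_{\lambda_N}(\eta(N)) - u_{\lambda_N}(\eta(0))\big\|_{\cL^2(\P)} \le 2\|u_{\lambda_N}\|,
\qquad
\Big\|\lambda_N\!\int_0^N u_{\lambda_N}(\eta(s))\,\d s\Big\|_{\cL^2(\P)} \le \lambda_N N\,\|u_{\lambda_N}\|,
\end{equation*}
and both bounds equal $\sqrt N\cdot\lambda_N^{1/2}\|u_{\lambda_N}\|$ up to a constant, hence are $o(\sqrt N)$ thanks to \eqref{conditionA}. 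Coupling this with the $\cL^2$-approximation $M_{\lambda_N}\to M$ yields the martingale approximation \eqref{kv_martappr}. The finite-dimensional CLT for $t\mapsto N^{-1/2}M(Nt)$ then follows from the classical martingale central limit theorem for stationary ergodic martingales with variance $\sigma^2$ per unit time, and \eqref{kv_martappr} transfers this limit to $N^{-1/2}\int_0^{Nt}f(\eta(s))\,\d s$. Within this abstract argument nothing is deep; the genuinely hard part—which is conceptual rather than technical—is to verify \eqref{conditionA} and \eqref{conditionB} for the concrete generator at hand, and it is this task that the graded sector condition will accomplish in section \ref{s:proof}.
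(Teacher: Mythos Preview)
Your sketch is a correct outline of the standard resolvent-based proof of the non-reversible Kipnis\,--\,Varadhan theorem. Note, however, that the paper does \emph{not} give its own proof of this statement: Theorem~KV is quoted as a known result, with the remark that ``its proof follows the original proof of the Kipnis\,--\,Varadhan theorem with the difference that spectral calculus is to be replaced by resolvent calculus,'' and with references to \cite{toth_86}, \cite{varadhan_96}, \cite{sethuraman_varadhan_yau_00}, \cite{olla_01}, \cite{komorowski_landim_olla_09}. What you have written is precisely the resolvent-calculus argument the authors are alluding to, so in that sense you are filling in what the paper deliberately omits rather than deviating from it.

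One small point worth making explicit in your write-up: when you pass from $M_{\lambda_N}(N)$ to the limit $M(N)$, the bound you need is $\expect{(M_{\lambda_N}(N)-M(N))^2}=2N\|S^{1/2}u_{\lambda_N}-v\|^2=o(N)$, which follows directly from \eqref{conditionB}; you use this implicitly but it is the step that ties the martingale approximation to the limiting martingale rather than to the $\lambda$-dependent one.
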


\bigskip
\noindent
{\bf Remarks:}
\paragraph{(1)}
Conditions \eqref{conditionA} and \eqref{conditionB} of the theorem are jointly equivalent to the following
\begin{equation}
\label{conditionC}
\lim_{\lambda,\lambda'\to0}(\lambda+\lambda')(u_\lambda,u_{\lambda'})=0.
\end{equation}
Indeed, straightforward computations yield:
\begin{equation}
\label{A+B=C}
(\lambda+\lambda')(u_\lambda,u_{\lambda'}) =
\norm{S^{1/2}(u_\lambda-u_{\lambda'})}^2 + \lambda \norm{u_\lambda}^2 +
\lambda' \norm{u_{\lambda'}}^2.
\end{equation}

\paragraph{(2)}
The theorem is a generalization to non-reversible setup of the
celebrated Kipnis\,--\,Varadhan theorem, \cite{kipnis_varadhan_86}. To the best of our knowledge, the non-reversible formulation, proved with resolvent rather
than spectral calculus, appears first -- in discrete-time Markov chain, rather than continuous-time Markov process setup and with condition \eqref{conditionC} -- in \cite{toth_86} where it was applied, with bare hand computations, to obtain CLT for a particular random walk in random environment. Its proof follows the original proof of the  Kipnis\,--\,Varadhan theorem with the difference that spectral calculus is to be replaced by resolvent calculus.

\paragraph{(3)}
In continuous-time Markov process setup, it was formulated in
\cite{varadhan_96} and applied to tagged particle motion in non-reversible zero mean exclusion processes. In this paper, the \emph{(strong) sector condition} was formulated, which, together with an $H_{-1}$-bound on the function $f\in\cH$, provide sufficient condition for \eqref{conditionA} and
\eqref{conditionB} of Theorem KV to hold.

\paragraph{(4)}
In \cite{sethuraman_varadhan_yau_00}, the so-called \emph{graded sector condition} is formulated and Theorem KV is applied to tagged particle diffusion in general (non-zero mean) non-reversible exclusion processes, in $d\ge3$.

\paragraph{(5)}
For a more complete list of applications of Theorem KV together with the strong and graded sector conditions, see the surveys \cite{olla_01}, \cite{komorowski_landim_olla_09}.\\

Checking conditions \eqref{conditionA} and \eqref{conditionB} (or, equivalently, condition \eqref{conditionC}) in particular applications is typically not easy. In the applications to RWRE in \cite{toth_86}, the conditions were checked by some tricky bare hand computations. In \cite{varadhan_96}, respectively, \cite{sethuraman_varadhan_yau_00}, the so-called \emph{sector condition}, respectively, the \emph{graded sector condition} were introduced and checked for the respective models.

We recall from \cite{sethuraman_varadhan_yau_00} the graded sector condition. Assume that the Hilbert space $\cH=\cL^2(\Omega, \pi)$ is graded
\begin{equation}
\label{grading2}
\cH=\overline{\oplus_{n=0}^\infty\cH_n},
\end{equation}
and the infinitesimal generator is consistent with this grading in the sense of \eqref{grading}.

\begin{theorem*}[\bf SVY]
\label{thm:svy}
Assume that the Hilbert space and the infinitesimal generator
$G=-S+A$ are graded in the sense specified above and, in addition, there exist $\gamma\in[0,1)$ and  $C<\infty$ such that for any $n\in\N$ and any $g\in\cH_n$, $h\in\cH_{n+1}$
\begin{equation}
\label{gsc}
\abs{(h, A_+ g)}\le C n^{\gamma}\sqrt{(h,Sh)}\sqrt{(g,Sg)}.
\end{equation}
If $f\in\cH$ with $(f,\one)=0$ is such that
\begin{equation}
\label{H-1}
\norm{S^{-1/2}f}:=
\lim_{\lambda\to0}(f,u_\lambda)<\infty,
\end{equation}
then \eqref{conditionA} and \eqref{conditionB} hold and, as consequence, the conclusions of Theorem KV are valid.  \end{theorem*}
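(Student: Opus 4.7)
The plan is to verify the Cauchy-type condition \eqref{conditionC}, which, by the identity in Remark (1), is equivalent to the two conditions \eqref{conditionA} and \eqref{conditionB}. First, I establish uniform a priori energy bounds: taking the inner product of $(\lambda - G) u_\lambda = f$ with $u_\lambda$ and using skew-adjointness of $A$ gives
\begin{equation*}
\lambda \|u_\lambda\|^2 + \|S^{1/2} u_\lambda\|^2 = (u_\lambda, f),
\end{equation*}
so the $H_{-1}$ hypothesis, interpreted as $(u, f) \le \|S^{-1/2} f\|\,\|S^{1/2} u\|$, yields the uniform bounds $\|S^{1/2} u_\lambda\| \le \|S^{-1/2} f\|$ and $\sqrt\lambda\,\|u_\lambda\| \le \|S^{-1/2} f\|$. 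These do not yet invoke the graded sector condition.

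To upgrade boundedness to convergence, I decompose along the grading. Writing $f = \sum_n f_n$ and $u_\lambda = \sum_n u_{\lambda, n}$ with $f_n, u_{\lambda, n} \in \mathcal H_n$, the block structure \eqref{grading} turns the resolvent equation into the tridiagonal system
\begin{equation*}
(\lambda + S) u_{\lambda, n} = f_n + A_+ u_{\lambda, n-1} + A_- u_{\lambda, n+1}, \qquad n \ge 0.
\end{equation*}
I then introduce the finite-grade truncation $u_\lambda^{(N)}$, defined by imposing $u_{\lambda, n}^{(N)} = 0$ for $n > N$ and solving the corresponding truncated coupled system on $\oplus_{n=0}^N \mathcal H_n$. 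Inside this finite-grade subspace the graded sector condition reduces to a strong sector condition with the $N$-dependent constant $CN^\gamma$, so the non-reversible KV argument (cf.\ Remark (3)) produces, for each fixed $N$, the limits $v^{(N)} := \lim_{\lambda \to 0} S^{1/2} u_\lambda^{(N)}$ and $\lim_{\lambda \to 0} \sqrt\lambda\,\|u_\lambda^{(N)}\| = 0$.

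The main obstacle is the tail estimate, uniform in $\lambda$:
\begin{equation*}
\lim_{N \to \infty} \sup_{\lambda \in (0, 1]}\Big(\|S^{1/2}(u_\lambda - u_\lambda^{(N)})\| + \sqrt\lambda\,\|u_\lambda - u_\lambda^{(N)}\|\Big) = 0.
\end{equation*}
This is precisely where the graded sector condition \eqref{gsc} with exponent $\gamma < 1$ is genuinely used. Subtracting the truncated equation from the full system, the difference is driven only by the high-grade tail $u_{\lambda, n}$ for $n > N$, coupled through $A_\pm$; the bound $|(h, A_+ g)| \le C n^\gamma \|S^{1/2} h\|\,\|S^{1/2} g\|$ allows one to propagate control layer by layer, and since $\gamma < 1$ the resulting series is summable. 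Combined with the square-summability $\|S^{-1/2} f\|^2 = \sum_n \|S^{-1/2} f_n\|^2 < \infty$ and the a priori bound from stage one, this produces a tail series that vanishes as $N \to \infty$ uniformly in small $\lambda$. A standard $\varepsilon/3$ argument -- splitting $S^{1/2}(u_\lambda - u_{\lambda'})$ into a tail part bounded uniformly via the tail estimate and a fixed-$N$ part which is Cauchy by the finite-grade argument -- then yields \eqref{conditionA} and \eqref{conditionB}, and Theorem KV supplies the martingale approximation and CLT conclusions.
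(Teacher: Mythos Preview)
The paper does not actually prove Theorem~SVY: it is stated in Section~\ref{s:KV} as a result \emph{recalled} from \cite{sethuraman_varadhan_yau_00}, with no proof supplied. So there is no ``paper's own proof'' to compare against; the paper simply cites the theorem and then, in Section~\ref{s:proof}, verifies its hypotheses for the specific model.

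Regarding your outline as an attempt at the result itself: the overall architecture is the right one and matches the strategy in \cite{sethuraman_varadhan_yau_00} --- a priori energy bound from the resolvent identity, tridiagonal block structure from \eqref{grading}, finite-grade truncation on which the graded bound becomes an ordinary sector bound, and a uniform-in-$\lambda$ tail estimate glued together by an $\varepsilon/3$ argument. However, the step you label ``the main obstacle'' is not actually carried out. You assert that \eqref{gsc} ``allows one to propagate control layer by layer, and since $\gamma<1$ the resulting series is summable,'' but you never write down what recursion is being propagated, what series is being summed, or why $\gamma<1$ (as opposed to $\gamma\le1$) is the threshold. In the genuine argument this is the entire content of the proof: one derives, from the tridiagonal resolvent equations and \eqref{gsc}, quantitative inequalities relating $\|S^{1/2}u_{\lambda,n}\|$ to its neighbours and to $\|S^{-1/2}f_n\|$, and then shows by an inductive or summation argument that the high-grade energy $\sum_{n>N}\|S^{1/2}u_{\lambda,n}\|^2$ is small uniformly in $\lambda$. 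Without that computation your proposal is a plausible plan rather than a proof; the substantive analytic work remains to be done.
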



\section{Proof of the CLT}
\label{s:proof}

The proof of Theorem \ref{thm:clt} consists of three parts. In paragraph \ref{sss:lower_bound}, we prove diffusive lower bound on the variance of the displacement $X(t)$. We need this in order to exclude the possibility that the a priori martingale part of the displacement and the martingale approximation of the compensator in the limit just cancel out. (As it is well known, this happens for example in tagged particle diffusion in 1d simple symmetric exclusion process with nearest neighbour jumps, \cite{arratia_83}.) In paragraph \ref{sss:upper_bound}, we prove the $H_{-1}$-bound \eqref{H-1} for our particular case. Finally, in subsection \ref{ss:gsc}, we check conditions
\eqref{gsc} of Theorem SVY for our particular model.

\subsection{Diffusive bounds}
\label{ss:diffusive_bounds}


\subsubsection{Lower bound}
\label{sss:lower_bound}

For $s,t\in\R$ with $s<t$, let
\begin{equation}
\label{Mstdef}
M(s,t):=X(t)-X(s)-\int_s^t \varphi(\eta(u))\,\d
u=B(t)-B(s).
\end{equation}

\begin{lemma}
\label{lemma:forwbackw}
(1)
Fix $s\in\R$. The process $[s,\infty)\ni t\mapsto M(s,t)$ is a forward martingale with respect to the forward filtration
$\{\cF_{(-\infty,t]}:t\ge s\}$ of the process $t\mapsto\eta(t)$.
\\
(2)
Fix $t\in\R$. The process $(-\infty,t]\ni s\mapsto M(s,t)$ is a backward martingale with respect to the backward filtration $\{\cF_{[s,\infty)}:s\le t\}$ of the process $t\mapsto\eta(t)$.
\end{lemma}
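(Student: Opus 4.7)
\medskip
\noindent\textbf{Proof plan.}
The first step is the observation $M(s,t) = B(t)-B(s)$, which follows immediately by combining \eqref{Mstdef} with the SDE representation \eqref{displ}. The lemma is therefore a statement about the driving Brownian motion $B$ being a martingale in both time directions with respect to the natural filtrations of the environment process $\eta$.

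Part (1) will be essentially immediate. The state $\eta(u,\cdot)$ at any time $u\le t$ is a measurable functional of the initial profile $\zeta(0,\cdot)$ and of the Brownian path $\{B(v): v\le u\}$; since $\zeta(0,\cdot)$ is independent of $B$, we have the inclusion $\cF_{(-\infty,t]}\subset \sigma(\zeta(0,\cdot)) \vee \sigma(B(v): v\le t)$, and the independence of future Brownian increments from this enlarged filtration yields
\be
\condexpect{B(t')-B(t)}{\cF_{(-\infty,t]}} = 0, \qquad t\le t',
\ee
which is exactly the forward martingale property of $t\mapsto M(s,t)$.

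Part (2) is the substantive half, and the natural tool is \emph{Yaglom reversibility} \eqref{yaglom}. This identity guarantees that the flipped and time-reversed process $\wt\eta(t):=-\eta(-t)$ has the same law as $\eta$; in particular it will admit its own representation of the form \eqref{Xeq}--\eqref{zetaeq} driven by some Brownian motion $\wt B$ adapted to the forward filtration of $\wt\eta$. A short change of variables $u\mapsto -u$, together with the linearity $\varphi(-\omega)=-\varphi(\omega)$, should identify this reversed Brownian motion (after the drifts cancel) as
\be
\wt B(t)-\wt B(s) = B(-t)-B(-s).
\ee
The forward filtration of $\wt\eta$ at time $-s$ coincides with the backward filtration $\cF_{[s,\infty)}$ of $\eta$, so the forward martingale property of $\wt B$ translates directly into $\condexpect{B(s_1)-B(s_0)}{\cF_{[s_1,\infty)}} = 0$ for $s_0\le s_1\le t$, which is the backward martingale property of $s\mapsto M(s,t)$.

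The part I expect to require the most care is the explicit identification of $\wt B$ in terms of $B$ and the careful verification that it is genuinely adapted to the reversed filtration; once this bookkeeping is set up, both halves of the lemma reduce to the standard forward Brownian martingale property in the appropriate filtration.
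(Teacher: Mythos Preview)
Your proposal is correct and follows essentially the same route as the paper: both arguments reduce Part~(2) to Part~(1) via Yaglom reversibility \eqref{yaglom}, using that displacements flip sign under $t\mapsto -t$, that $\varphi$ is odd under $\omega\mapsto-\omega$, and that the forward filtration of $\wt\eta$ at time $-s$ is exactly $\cF_{[s,\infty)}$. The only cosmetic difference is that the paper computes the infinitesimal backward drift of $X$ directly (obtaining $\varphi(\eta(s))$ from the three ingredients), whereas you package the same computation as the identification $\wt B(t)-\wt B(s)=B(-t)-B(-s)$ of the Brownian motion driving $\wt\eta$; the step you flag as needing care---that $\wt B$ is genuinely adapted to the $\wt\eta$-filtration---follows because $M(s,t)$ is a measurable functional of the $\eta$-path (recoverable from the martingale part of \eqref{ito}), so equality in law of $\wt\eta$ and $\eta$ transfers the forward-martingale property.
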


\begin{proof}
There is nothing to prove about the first statement: the integral on the right-hand side of \eqref{Mstdef} was chosen exactly so that it compensates the conditional expectation of the infinitesimal increments of $X(t)$.

We turn to the second statement, which does need a proof. This consists of the following ingredients:
\begin{enumerate}[(1)]

\item
The displacements are reverted on the flipped backward trajectories $t\mapsto\wt\eta(t)$ defined in \eqref{revproc}:
\begin{equation}
\wt X(t)-\wt X(s)=-X(t)+X(s).
\end{equation}

\item
The forward process $t\mapsto\eta(t)$ and flipped backward process $t\mapsto\wt\eta(t)$ are identical in law (Yaglom reversibility).

\item
The function $\omega\mapsto\varphi(\omega)$ is odd with respect to the flip-map $\omega\mapsto -\omega$.

\end{enumerate}

\noindent
Putting these facts together (in this order), we obtain
\begin{align}
\lim_{h\to0}h^{-1}\condexpect{X(s-h)-X(s)}{\cF_{[s,\infty)}}
& =
\lim_{h\to0}h^{-1}\condexpect{-\wt X(-s+h)+\wt X(-s)}{\wt
\cF_{(-\infty,-s]}}\notag
\\[5pt]
\label{bwmart}
& =
-\varphi(\wt\eta(-s)) = \varphi(\wt\eta(s)).
\end{align}

\end{proof}

From Lemma \ref{lemma:forwbackw}, it follows directly that for any $s<t$, the random variables $M(s,t)$ and $\int_s^t \varphi(\eta(u))\,\d u$ are \emph{uncorrelated}, and therefore
\begin{align}
\label{variancesum}
\expect{(X(t)-X(s))^2}
&=
\expect{(M(s,t))^2}+
\expect{\big(\int_s^t \varphi(\eta(u))\,\d u\big)^2}
\\[5pt]
\notag
&=
(t-s) +
\expect{\big(\int_s^t \varphi(\eta(u))\,\d u\big)^2}.
\end{align}
Hence, the lower bound in \eqref{bounds}.


\subsubsection{Upper bound: $H_{-1}$-bound}
\label{sss:upper_bound}

We recall a general result proved in \cite{sethuraman_varadhan_yau_00}. See also the surveys \cite{olla_01}, \cite{komorowski_landim_olla_09} and further references cited therein.

Let $t\mapsto\xi(t)$ be the \emph{reversible} Markov process on the same state space $(\Omega, \pi)$ as the original $\eta(t)$ which has the infinitesimal generator $-S$.

\begin{lemma*}[\bf SVY]
\label{lemma:v}
Let $\varphi\in \cL^2(\Omega,\pi)$ with $\int \varphi\,\d\pi=0$. Then
\begin{equation}
\label{svy_bound}
\limsup_{t\to\infty}
t^{-1}\expect{\big(\int_0^t \varphi(\eta(s))\,\d s\big)^2}
\le
\lim_{t\to\infty}
t^{-1}\expect{\big(\int_0^t \varphi(\xi(s))\,\d s\big)^2}
=
2\Vert S^{-1/2} \varphi\Vert^2.
\end{equation}
\end{lemma*}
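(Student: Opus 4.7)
The plan is to handle the two claims separately. The right-hand equality, for the reversible process $\xi$ with generator $-S$, follows from spectral calculus applied to the self-adjoint operator $S$. The inequality on the left, for the non-reversible $\eta$ with generator $G = -S + A$, is obtained via the resolvent of $G$: skew-symmetry of $A$ forces only the symmetric part $S$ to appear in the energy of the resolvent equation, producing the uniform bound $(u_\lambda, \varphi) \le \|S^{-1/2}\varphi\|^2$ on which everything hinges.

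For the reversible equality, I would write $P_t^\xi = e^{-tS}$ and use the spectral representation $(\varphi, P_s^\xi \varphi) = \int_0^\infty e^{-s\nu}\,\d\mu_\varphi(\nu)$, with $\mu_\varphi$ the spectral measure of $S$ at $\varphi$. Substituting into the stationary formula $V_T^\xi := \expect{\big(\int_0^T\varphi(\xi(s))\,\d s\big)^2} = 2\int_0^T (T-s)(\varphi, P_s^\xi\varphi)\,\d s$ and integrating in $s$ gives
\begin{equation*}
T^{-1}V_T^\xi = 2\int_0^\infty \frac{T\nu - 1 + e^{-T\nu}}{T\nu^2}\,\d\mu_\varphi(\nu) \xrightarrow[T\to\infty]{} 2\int_0^\infty \nu^{-1}\,\d\mu_\varphi(\nu) = 2\|S^{-1/2}\varphi\|^2,
\end{equation*}
by monotone convergence, the limit being finite by hypothesis (and the whole inequality being vacuous otherwise).

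For the non-reversible upper bound, fix $\lambda > 0$, put $u_\lambda := (\lambda I - G)^{-1}\varphi$, and rewrite $\varphi = \lambda u_\lambda - G u_\lambda$. Dynkin's formula gives the decomposition
\begin{equation*}
\int_0^T \varphi(\eta(s))\,\d s = M_T^\lambda + u_\lambda(\eta(0)) - u_\lambda(\eta(T)) + \lambda\int_0^T u_\lambda(\eta(s))\,\d s,
\end{equation*}
where the Dynkin martingale $M_T^\lambda$ satisfies $\expect{(M_T^\lambda)^2} = 2T\|S^{1/2}u_\lambda\|^2$, since the carr\'e-du-champ $G(f^2) - 2fGf$ only sees the symmetric part $S$ of $G$ (the antisymmetric $A$ contributes nothing because $(u, Au) = 0$). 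Testing $\varphi = (\lambda I - G)u_\lambda$ against $u_\lambda$ and using $(u_\lambda, A u_\lambda) = 0$ produces the identity and the key bound
\begin{equation*}
(u_\lambda, \varphi) = \lambda\|u_\lambda\|^2 + \|S^{1/2}u_\lambda\|^2, \qquad \|S^{1/2}u_\lambda\|^2 \le \|S^{-1/2}\varphi\|^2,
\end{equation*}
the inequality by Cauchy--Schwarz: $(u_\lambda, \varphi) = (S^{1/2}u_\lambda, S^{-1/2}\varphi) \ge \|S^{1/2}u_\lambda\|^2$.

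To convert this resolvent bound into the claimed asymptotic variance bound, the cleanest route is the Laplace-transform identity $\int_0^\infty e^{-\lambda T} V_T\,\d T = 2\lambda^{-2}(u_\lambda, \varphi)$ (with $V_T := \expect{(\int_0^T\varphi(\eta(s))\,\d s)^2}$), which immediately yields $\lambda^2 \int_0^\infty e^{-\lambda T} V_T\,\d T \le 2\|S^{-1/2}\varphi\|^2$ for every $\lambda > 0$. The main obstacle is the subsequent Tauberian step: $V_T$ need not be monotone, and a naive triangle-inequality estimate of the decomposition above fails to produce the sharp constant $2$, because no single choice $\lambda = \lambda(T)$ can render both the boundary term (of size $\|u_\lambda\| \sim \lambda^{-1/2}$) and the drift term (of size $\lambda T \|u_\lambda\| \sim \lambda^{1/2}T$) $o(\sqrt T)$ simultaneously. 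I would resolve this by applying Karamata's theorem to the monotone antiderivative $W_T := \int_0^T V_t\,\d t$, obtaining $\limsup T^{-2}W_T \le \|S^{-1/2}\varphi\|^2$, and then transferring the Cesaro bound to $V_T/T$ by a local Lipschitz estimate on $V_T$ in $T$ derived from stationarity. An alternative is to symmetrize: Yaglom reversibility $G^* = JGJ$ provides a second martingale decomposition running backward in time, and averaging the forward and backward martingales cancels the antisymmetric contribution at the variance level and gives the sharp bound directly.
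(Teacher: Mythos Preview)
The paper does not prove this lemma; it is quoted as a known result from \cite{sethuraman_varadhan_yau_00} (with pointers to \cite{olla_01}, \cite{komorowski_landim_olla_09}), so there is no in-paper argument to compare against. Your outline is essentially the standard one from those references, with one soft spot worth flagging.

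The reversible equality via the spectral theorem is fine. The resolvent identity $(u_\lambda,\varphi)=\lambda\|u_\lambda\|^2+\|S^{1/2}u_\lambda\|^2$ and the bound $(u_\lambda,\varphi)\le\|S^{-1/2}\varphi\|^2$ are also correct, although your Cauchy--Schwarz sentence is tangled: the lower bound $(u_\lambda,\varphi)\ge\|S^{1/2}u_\lambda\|^2$ is immediate from the identity, while Cauchy--Schwarz supplies the \emph{upper} bound $(u_\lambda,\varphi)\le\|S^{1/2}u_\lambda\|\,\|S^{-1/2}\varphi\|$; combining the two gives $\|S^{1/2}u_\lambda\|\le\|S^{-1/2}\varphi\|$ and then $(u_\lambda,\varphi)\le\|S^{-1/2}\varphi\|^2$.

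The real issue is the passage from the Laplace bound to the $\limsup$. Your route (a), Karamata on the monotone primitive $W_T$, will not recover the sharp constant $2$: a one-sided Laplace upper bound on a monotone function yields only $W_T\le C'T^2$ with $C'$ strictly worse than the Abelian constant, and no Lipschitz estimate on $V_T$ repairs this. Since the paper uses the lemma with its exact constant to obtain $\sigma^2\le1+\rho^2$ in Theorem~\ref{thm:clt}, this matters. Your route (b) is the right idea but over-specified: Yaglom reversibility is irrelevant here, and pairing the forward $G$-resolvent with the backward $G^*$-resolvent is awkward because $(\lambda-G)^{-1}\varphi\neq(\lambda-G^*)^{-1}\varphi$ in general, so the boundary terms do not cancel. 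The clean argument uses ordinary time reversal with a \emph{symmetric} potential: take $\psi$ with $S\psi=\varphi$ (approximating if necessary), form the forward Dynkin martingale $M_T$ for $\psi$ under $G$ and the backward one $M_T^*$ under $G^*$; since $G+G^*=-2S$ and the boundary terms $\psi(\eta(T))-\psi(\eta(0))$ cancel, one gets the exact identity $M_T+M_T^*=2\int_0^T\varphi(\eta(s))\,\d s$. Both martingales have stationary variance $2T\|S^{1/2}\psi\|^2=2T\|S^{-1/2}\varphi\|^2$, so $(a+b)^2\le2a^2+2b^2$ gives $V_T\le2T\|S^{-1/2}\varphi\|^2$ for every $T$, with the sharp constant and no Tauberian step at all.
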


\medskip
\noindent
In our case,
\begin{equation}
\label{drsgen}
S=- \frac12 \Delta,
\end{equation}
and the reversible process $t\mapsto\xi(t)$ is the so-called \emph{diffusion in random scenery} process. That means:
\begin{equation}
\label{rwrs}
\xi(t):=\tau_{Z_t}\omega
\end{equation}
where $t\mapsto Z_t$ is a Brownian motion in $\R^d$ of covariance $\delta_{ij}$, independent of the field $\omega$. The function $\varphi:\Omega\to\R$ is $\varphi(\omega)=\omega(0)$. Thus, the upper bound in
\eqref{svy_bound} will be
\begin{equation}
\label{ourbound}
\lim_{t\to\infty}
t^{-1}\expect{\big(\int_0^t\varphi(\xi(s))\,\d s\big)^2} = \lim_{t\to\infty}
t^{-1}\expect{\big(\int_0^t\omega(Z_s)\,\d s\big)^2} =
\int_{\mathbb{R}^d} \abs{p}^{-2}\,\wh V(p)\,\d p.
\end{equation}
Here, the last step is straightforward computation with expectation taken over the Brownian motion $Z(t)$ \emph{and} over the random scenery $\omega$. The integral on the right-hand side is the same as in \eqref{sumcond}, and thus,
\eqref{ourbound} yields the upper bound in \eqref{bounds}. \hfill\qed


\subsection{Graded sector condition}
\label{ss:gsc}

As a first remark, note that condition \eqref{gsc} is equivalent to
\begin{equation}
\label{gsc2}
\norm{S^{-1/2}A_+S^{-1/2}\upharpoonright_{\cH_n}}\le Cn^{\gamma},
\end{equation}
where the operator $S^{-1/2}A_+S^{-1/2}\upharpoonright_{\cH_n}$ is meant as first defined on a dense subspace of $\cH_n$ and extended by continuity. In our case, the dense subspace will be $\wh \cC_n$ specified in \eqref{coredefin} and
\begin{equation}
S^{-1/2}A_+S^{-1/2}
=
\sum_{l=1}^d \abs{\Delta}^{-1/2} a^*_l \nabla_l\abs{\Delta}^{-1/2}
\end{equation}
The operators $\nabla_l\abs{\Delta}^{-1/2}$ map the subspaces $\wh \cC_n$ to themselves and are bounded, see \eqref{nonameopsonhKn}, \eqref{nonameopnorm}. In order to bound the norm of the operator $\abs{\Delta}^{-1/2} a^*_l:\cH_n\to\cH_{n+1}$, let $\wh u\in\wh \cC_{n}$, then
\begin{equation}
\abs{\Delta}^{-1/2} a^*_l \wh u(p_1,\dots,p_{n+1})
=
\frac{i}{\sqrt{n+1}}
\frac{1}{\abs{\sum_{m=1}^{n+1} p_m}}
\sum_{m=1}^{n+1}p_{ml}\wh u(p_1,\dots,p_{m-1},p_{m+1},\dots,p_n).
\end{equation}
Hence
\begin{align}
\label{bou}
&
(n+1)
\norm{\abs{\Delta}^{-1/2} a^*_l \wh u}^2
=
\\
\notag
&=
\int_{\R^d}\!\!...\!\!\int_{\R^d}
\frac1{\abs{\sum_{m=1}^{n+1} p_m}^2}
\abs{\sum_{m=1}^{n+1} p_{ml}\wh u(p_1,\!...,p_{m-1},p_{m+1},\!...,p_n)}^2
\prod_{m=1}^{n+1}\frac{\wh V(p_m)}{\abs{p_m}^2}
\d p_1\!...\d p_{n+1}
\\
\notag
&\le
(n+1)^2
\int_{\R^d}\!\!...\!\!\int_{\R^d}
\frac1{\abs{\sum_{m=1}^{n+1} p_m}^2}
\abs{p_{n+1,l}}^2
\abs{\wh u(p_1,\!...,p_{n})}^2
\prod_{m=1}^{n+1}\frac{\wh V(p_m)}{\abs{p_m}^2}
\d p_1\!...\d p_{n+1}
\\
\notag
&=
(n+1)^2
\int_{\R^d}\!\!...\!\!\int_{\R^d}
\abs{\wh u(p_1,\!\!...,p_{n})}^2
\prod_{m=1}^{n}\frac{\wh V(p_m)}{\abs{p_m}^2}
\left(\int_{\R^d}
\frac{p_{n+1,l}^2}{\abs{p_{n+1}}^2}
\frac{\wh V(p_{n+1})}{\abs{\sum_{m=1}^{n+1} p_m}^2}
\d p_{n+1}\right)
\d p_1\!...\d p_{n}.
\end{align}
In the second line, Schwarz's inequality and the symmetry of the function $\wh u(p_1,\dots,p_n)$ is used.

The innermost integral of the last expression in \eqref{bou} is bounded above by
\begin{align}
C^2:=
\sup_{p\in\R^d}\int_{\R^d}\frac{\wh V(p+q)}{\abs{q}^2} \,\d q
<\infty.
\end{align}
Thus, for $\wh u\in\wh \cC_n$
\begin{equation}
\norm{\abs{\Delta}^{-1/2} a^*_l \wh u}^2 \le C^2 (n+1)\norm{\wh u}^2 .
\end{equation}
Hence, by continuous extension,
\begin{equation}
\norm{\abs{\Delta}^{-1/2} a^*_l\upharpoonright_{\cH_n}} \le C \sqrt{n+1},
\end{equation}
and \eqref{gsc2} with $\gamma=1/2$ follows.
\qed

\vskip2cm \noindent {\bf Acknowledgement.} We thank Benedek Valkó for his remarks on the first draft of this paper. BT thanks the kind hospitality of the Mittag Leffler Insitute, Stockholm, where part of this work was done. The work of all authors was partially supported by OTKA (Hungarian National Research Fund) grant K 60708.

\end{document}